\newcommand{\biggg}{\bBigg@\thr@@}
\newcommand{\Biggg}{\bBigg@{3.5}}
\newcommand{\vast}{\bBigg@{4}}
\newcommand{\Vast}{\bBigg@{5}}
\definecolor{mycolor}{HTML}{750000}
  \def\xi{ξ}%
  \def\delta{δ}%
  \def\varpi{ϖ}%
  \def\zeta{ζ}%
  \def\Omega{Ω}%
  \def\leq{≤}%
  \def\gamma{γ}%
  \def\Gamma_K{Γ}%
  \def\Cref#1{#1}%
  \def\texttt#1{<#1>}%
  \let\HyPsd@CatcodeWarning\@gobble
\Crefname{subsection}{Subsection}{Subsections}
\Crefname{question}{Question}{Questions}
\Crefname{subsubsection}{Paragraph}{Paragraphs}
\newlist{propenum}{enumerate}{1}
\setlist[propenum]{label=(\roman*), ref=\theproposition~(\roman*)}
\newlist{thmenum}{enumerate}{1}
\setlist[thmenum]{label=(\roman*), ref=\thetheorem~(\roman*)}
\newlist{corenum}{enumerate}{1}
\setlist[corenum]{label=(\roman*), ref=\thecorollary~(\roman*)}
\newcommand{\N}{\mathbb{N}}
\newcommand{\Z}{\mathbb{Z}}
\newcommand{\Zpos}{\Z_{\geq 0}}
\newcommand{\Zsp}{\Z_{>0}}
\newcommand{\Q}{\mathbb{Q}}
\newcommand{\C}{\mathbb{C}}
\newcommand{\R}{\mathbb{R}}
\renewcommand{\AC}{\mathbb{A}^1(\C)}
\renewcommand{\P}{\mathbb{P}}
\newcommand{\PC}{\P^1(\C)}
\newcommand{\Oo}[1]{O\!\left(#1\right)}
\newcommand{\Sym}{\mathfrak{S}}
\newcommand{\card}[1]{ \left | #1 \right | }
\newcommand{\gen}[1]{ \left \langle #1 \right \rangle }
\newcommand{\ord}{\mathrm{ord}}
\newcommand{\Hur}{\mathrm{Hur}} 
\newcommand{\CHur}{\mathrm{CHur}} 
\newcommand{\Conf}{\mathrm{Conf}} 
\newcommand{\B}{\mathrm{B}} 
\newcommand{\Comp}{\mathrm{Comp}} 
\newcommand{\Sub}{\mathrm{Sub}}
\renewcommand{\HF}{\mathrm{HF}}
\newcommand{\gbar}{\underline{g}}
\newcommand{\tbar}{\underline{t}}
\newcommand{\verti}{\, \middle \vert \,}
\newcommand{\Nn}{\mathcal{N}}
\renewcommand{\bar}{\overline}
\renewcommand{\hat}{\widehat}
\renewcommand{\tilde}{\widetilde}
\newcommand{\ab}{\textnormal{ab}}
\renewcommand{\binom}[2]{\left(\begin{matrix}#1\\#2\end{matrix}\right)}
\newcommand{\customref}[2]{\hyperref[#2]{#1}}
\newcommand\iref[2]{\customref{\Cref*{#1}~\ref*{#2}}{#1}}
\newcounter{mycounter}[section]
\theoremstyle{plain}
\newtheorem{theorem}[mycounter]{Theorem}
\newtheorem{corollary}[mycounter]{Corollary}
\newtheorem{proposition}[mycounter]{Proposition}
\newtheorem{lemma}[mycounter]{Lemma}
\theoremstyle{remark}
\newtheorem{remark}[mycounter]{Remark}
\theoremstyle{definition}
\newtheorem{definition}[mycounter]{Definition}
\titleformat{\section}[block]{\normalfont\centering\scshape\large}{\thesection.}{1em}{}
\titleformat{\subsection}[block]{\normalfont\large}{\thesubsection.}{1em}{\bf}
\titleformat{\subsubsection}[runin]{\normalfont}{\bf\thesubsubsection.}{0.3em}{\bf}
\patchcmd{\@maketitle}{\LARGE}{\huge}{\typeout{OK 1}}{\typeout{Failed 1}}
\patchcmd{\@maketitle}{\large \lineskip}{\Large \lineskip}{\typeout{OK 2}}{\typeout{Failed 2}}
\title{
  Counting Components of Hurwitz Spaces
}
\author{Béranger Seguin\footnote{Universität Paderborn, Fakultät EIM, Institut für Mathematik, Warburger Str. 100, 33098 Paderborn, Germany. Email: \texttt{bseguin@math.upb.de}.}}
\date{}
\renewenvironment{abstract}{%
\par\noindent\rule{\textwidth}{1pt}
\par\noindent\textsc{Abstract.}}
{\par\noindent\rule{\textwidth}{1pt}}
\begin{document}

\maketitle{}

\begin{abstract}
	For a finite group $G$, we obtain asymptotics for the number of connected components of Hurwitz spaces of marked $G$-covers (of both the affine and projective lines) whose monodromy classes are constrained in a certain way, when the number of branch points grows to infinity.
	More precisely, we compute both the degree and (in many cases) the coefficient of the leading monomial in the count of components of marked $G$-covers whose monodromy group is a given subgroup of $G$.
	By the work of Ellenberg, Tran, Venkatesh and Westerland, these asymptotics are related to the distribution of field extensions of~$\mathbb{F}_q(T)$ with Galois group~$G$.

	\medskip

	\par\noindent
	\textbf{MSC 2020:} 14H30 $\cdot$ 14J10 $\cdot$ 14D22 $\cdot$ 12F12
\end{abstract}

{
  \hypersetup{linkcolor=black}
  \tableofcontents{}
}
\par\noindent\rule{\textwidth}{1pt}

\section{Introduction}

\subsection{Motivation and previous work}

Let $G$ be a finite group and $K$ be a field.
A central question in number theory is to describe the asymptotic count of extensions of $K$ whose Galois closures have Galois group isomorphic to~$G$, as an upper bound on their discriminant grows.
The most influential conjecture in this area was introduced by Malle in \cite{malle02}.
When $K=F(T)$ is a rational function field over a field~$F$, $G$-extensions of $K$ correspond to geometric objects, namely connected branched Galois covers of the projective line $\P^1_F$ whose automorphism group is isomorphic to $G$.
In turn, when $F$ has characteristic coprime to $|G|$, these covers are related
to the $F$-points of \emph{Hurwitz schemes}, which are moduli spaces parametrizing $G$-covers of $\P^1$ whose branch divisor has a fixed degree.
These ideas are detailed in \cite{Fried77,Fried91}.

In \cite{EVW}, Ellenberg, Venkatesh and Westerland set out to count the $\mathbb{F}_q$-points of Hurwitz schemes in order to establish a weak form of the Cohen--Lenstra heuristics over $\mathbb F_q(T)$ for $q$ large \cite[Theorem~1.2]{EVW}.
We describe their strategy shortly.
Using the Grothendieck--Lefschetz trace formula and Deligne's bounds on eigenvalues of Frobenius, they reduce the counting problem to a question about the algebraic topology of Hurwitz spaces \cite[Theorem~8.8]{EVW}.
Using a variant of the Koszul complex, they show that the higher homology of these spaces is controlled by the number of connected components \cite[Theorem~6.1]{EVW}.
They then conclude using a result of homological stability \cite[Lemma~3.5]{EVW}.
This strategy has been extended with success in \cite{ETW} to prove the upper bound in the weak Malle conjecture for $G$-extensions of $\mathbb F_q(T)$ (where~$q$ is coprime with~$|G|$).

\subsection{Approach of this work}

The work of Ellenberg, Tran, Venkatesh and Westerland has made clear that the asymptotic distribution of extensions of $\mathbb F_q(T)$ is related to the growth of Betti numbers of Hurwitz spaces.
In this article, we examine closely the asymptotics of their number of connected components, i.e., the ``zeroth homology''.
The main differences between the approach of \cite{EVW} and ours are the following:
\begin{itemize}
	\item
		Let $c$ be a conjugacy class of $G$ which generates $G$.
		The results of \cite{EVW} depend on the \emph{non-splitting property}: the pair $(G, c)$ is \emph{non-splitting} if for every subgroup $H \subseteq G$ intersecting~$c$, the set $c \cap H$ is a conjugacy class of $H$.
		In the situation of \cite{EVW}, this hypothesis is satisfied.
		We are mostly interested in situations where this assumption does not hold.

		We define a numerical invariant measuring the defect of this hypothesis, the \emph{splitting number} (\Cref{defn:splitting-number}), which vanishes exactly when the non-splitting property holds.
		Our main result, \Cref{thm:asymp-comp-main}, shows the importance of that invariant in the growth of the number of connected components of Hurwitz spaces.
		This generalizes some of the stabilization results of \cite{EVW}.
		We also systematically consider the case of multiple conjugacy classes, as was done in \cite{Tietz}.

		Moreover, we describe the leading coefficient of the corresponding counting function.
	\item
		In \cite{EVW}, the focus is on covers of the affine line, whereas we consider marked branched $G$-covers of both the affine and projective lines.
		The counting is subtler in the projective case.
\end{itemize}

Our results should also be compared to \cite[Theorem~10.4]{lwzb}, where estimates are given for the number of components which are defined over $\mathbb{F}_q$.
This corresponds to the case where $D$ is a singleton in the notation of our \Cref{thm:asymp-comp-main}, but with an additional number-theoretic constraint.
In that situation, the fact that components have to be fixed under the Frobenius map constrains the possibilities for the monodromy conjugacy classes and leads to a smaller exponent: in the ``splitting number'', the number of conjugacy classes that a given conjugacy class splits into is replaced by the number of orbits of such conjugacy classes under the $q$-th powering map.
More generally, it would be interesting to obtain ``number-theoretic versions'' of our \Cref{thm:asymp-comp-main}, counting only components which are defined over various non-algebraically closed fields (e.g., over number fields, cf. \cite{fielddef}).

\paragraph{Disclaimer.}
This article is based on the fourth chapter of \cite{SegThese}.
A first version of these results was made public in the preprint \cite{geomringcomp-old}, where the splitting number (cf.~\Cref{defn:splitting-number}) was shown to be the main exponent in the number of connected components of Hurwitz spaces (cf.~\Cref{sn:exponent}).
This number has since been reused in \cite{bianchi23} (which also deals with higher homology, see also \cite{quandles}) and in \cite{ETW}, where some of our estimates were rediscovered.

\subsection{Presentation of the results}
\label{subsn:presentation}

Fix a nontrivial finite group $G$, a set $D$ of nontrivial conjugacy classes of~$G$ which together generate~$G$, and a map $\xi : D \to \Zsp$, whose values are multiplicities associated to each conjugacy class $\gamma \in D$.
We let $\card{\xi} = \sum_{\gamma \in D} \xi(\gamma)$ and $c = \bigsqcup_{\gamma \in D} \gamma$.

Let $X$ be either the complex affine line $\AC$ or the complex projective line $\PC$.
We denote by $\Hur^*_X(G, D, n \xi)$ the Hurwitz space of marked $G$-covers of $X$ (not assumed to be connected) branched at $n \card{\xi}$ points such that, for each conjugacy class $\gamma \in D$, exactly $n\xi(\gamma)$ branch points have their local monodromy elements in $\gamma$ (see \Cref{ssn:hurwitz} for a more careful description).
When restricting to connected marked $G$-covers, the corresponding Hurwitz space is denoted by $\CHur^*_X(G, D, n \xi)$.

Our main result, \Cref{thm:asymp-comp-main}, describes the count of connected components of $\Hur^*_X(G, D, n \xi)$ asymptotically as $n\to\infty$, i.e., when the number of branch points grows to infinity.

\subsubsection{Terminology for subgroups.}
\label{subsubsn:termsubgp}

We first introduce the following terminology:
\begin{definition}
	\label{defn:dgen}
	A subgroup $H$ of $G$ is \emph{$D$-generated} if the subsets $\gamma \cap H$ for $\gamma \in D$ are all nonempty and collectively generate $H$.
	We let $\Sub_{G, D}$ be the set of subgroups of $G$ which are either trivial or $D$-generated.
\end{definition}

Let $H$ be a $D$-generated subgroup of $G$.
Then:
\begin{itemize}
	\item
		We let
		$
			D_H
			:=
			\left\{
				\gamma \cap H \verti \gamma \in D
			\right\}.
		$
		We let $\xi_H : D_H \to \Zsp$ be the map mapping an element $\gamma \cap H \in D_H$ to the integer $\xi(\gamma) \in \Zsp$, where $\gamma$ is the unique class in $D$ containing $\gamma \cap H$.
	\item
		We define the set $c_H = c \cap H$, which is a union of conjugacy classes of $H$ (recall that $c = \bigsqcup_{\gamma \in D} \gamma$).
		We denote by $D_H^*$ the set of all conjugacy classes of $H$ which are contained in the set $c_H$.
	\item
		We denote by $\tau_H : D_H^* \rightarrow D$ the surjection which maps a conjugacy class $\gamma \in D^*_H$ of $H$ to the conjugacy class $\tau_H(\gamma) \in D$ of $G$ containing $\gamma$.
\end{itemize}

\begin{definition}
	\label{defn:splitting-number}
	The splitting number $\Omega(D_H)$ of $H$ is defined as $\card{D_H^*} - \card{D_H}$.
\end{definition}

Note that the splitting number of $H$ is also given by $\Omega(D_H) = \sum_{\gamma \in D} \left( \card{\tau_H^{-1}(\gamma)} - 1 \right)$.
It is a  nonnegative integer, and it vanishes if and only if $D_H$ consists of conjugacy classes of $H$.
This integer is a quantitative version of the \emph{non-splitting property} introduced in \cite[Definition~3.1]{EVW}: the pair $(G, D)$ is \emph{non-splitting} when $\Omega(D_H) = 0$ for all $D$-generated subgroups~$H$ of~$G$.

\medskip

Separating covers according to their monodromy group, we obtain a homeomorphism:
\begin{equation}
	\label{eqn:hur-sqcup}
	\Hur^*_X(G, D, n \xi)
	=
	\bigsqcup_{H \in \Sub_{G, D}}
		\CHur^*_X(H, D_H, n \xi_H)
\end{equation}
relating the connected components of $\Hur^*_X(G, D, n \xi)$ to those of the Hurwitz spaces of connected $H$-covers, for all $D$-generated subgroups $H$ of~$G$.
Therefore, it suffices to estimate the number of connected components of $\CHur^*(H, D_H, n \xi_H)$ for $H \in \Sub_{G, D}$ to solve the original counting problem.
We denote by $\card{\pi_0 \CHur^*_X(H, D_H, n \xi_H)}$ the number of connected components of $\CHur^*_X(H, D_H, n \xi_H)$.

\subsubsection{Relation with Hilbert functions.}
Assuming that $X=\PC$, we now explain why the number of components grows quasi-polynomially.
In \Cref{subsn:mon-comp}, the set $\Comp_X(D, \xi) := \bigsqcup_{n \geq 0} \pi_0 \Hur^*_X(G, D, n \xi)$ is equipped with the structure of a graded monoid.
Let $R$ be the $\Q$-algebra of that monoid, which is a variant of the ``ring of components'' of~\cite{EVW}.
For a subgroup $H \subseteq G$, let~$R^H$ be the subalgebra of~$R$ generated by components of $G$-covers with monodromy group contained in~$H$.
\Cref{propitem:product-one-commute,prop:fingen} 
imply that $R^H$ is a finitely generated commutative graded $\Q$-algebra (whose generators need not be of degree~$1$).
The dimension of the space of homogeneous elements of degree~$n$ in~$R^H$ equals the number of connected components of $\Hur^*_{\PC}(H, D_H, n \xi_H)$, which decomposes as in \Cref{eqn:hur-sqcup}.
In particular, the Hilbert function $\HF_{R^H}$ of $R^H$ satisfies:
\[
	\HF_{R^H}(n)
	=
	\sum_{H' \subseteq H}
		\card{\pi_0 \CHur^*_X\big(
			H', D_{H'}, n \xi_{H'}
		\big)}.
\]
From this, one deduces an expression of $\card{\pi_0 \CHur^*_X(H, D_H, n \xi_H)}$ as a weighted sum of Hilbert functions of finitely many finitely generated commutative graded $\Q$-algebras, using a generalized Möbius inversion formula on the lattice of subgroups of~$G$.
This observation, coupled with a variant of the Hilbert--Serre theorem for non-standard graded algebras (cf.~\cite[Theorem~4.4.3]{cohmac}, \cite[Lemmas~5.4.4, 5.4.5 and 5.4.6]{SegThese}%
), has the following consequence for the counting function:
\begin{proposition}
	\label{prop:hilbfun-oscpol}
	There is an integer $W \geq 1$ and integer-valued polynomials $Q_0, \ldots, Q_{W-1} \in \Q[X]$, with $\deg Q_0 \geq \deg Q_i$ for all $i \in \{1,\ldots,W-1\}$, such that, for all $n \in \N$ large enough, we have $\card{\pi_0 \CHur^*_X(H, D_H, n \xi_H)} = Q_{n \bmod W}(n)$.
\end{proposition}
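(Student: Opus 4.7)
I plan to deduce the proposition from the identity
\[
	\HF_{R^H}(n) = \sum_{H' \subseteq H,\, H' \in \Sub_{G,D}} |\pi_0 \CHur^*_X(H', D_{H'}, n \xi_{H'})|
\]
recorded in the text, combined with the finite generation of each $R^{H'}$ as a commutative graded $\Q$-algebra (from \Cref{propitem:product-one-commute} and \Cref{prop:fingen}) and the non-standard Hilbert-Serre theorem cited from \autocite{cohmac} and \autocite{SegThese}.

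The first step is to apply Möbius inversion on the finite poset $\Sub_{G, D}$ to obtain
\[
	|\pi_0 \CHur^*_X(H, D_H, n \xi_H)| = \sum_{H' \subseteq H,\, H' \in \Sub_{G,D}} \mu(H', H) \, \HF_{R^{H'}}(n)
\]
for the appropriate Möbius function $\mu$, thereby expressing $|\pi_0 \CHur^*_X(H, D_H, n\xi_H)|$ as a $\Z$-linear combination of Hilbert functions. The second step is to apply the non-standard Hilbert-Serre theorem to each $R^{H'}$: for $n$ large, $\HF_{R^{H'}}(n) = Q^{(H')}_{\bar n}(n)$ for some period $W_{H'}$ and integer-valued polynomials $Q^{(H')}_0, \ldots, Q^{(H')}_{W_{H'}-1} \in \Q[X]$. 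Taking $W$ to be the least common multiple of the $W_{H'}$'s, these all become quasi-polynomials of common period $W$, and substituting into the Möbius sum produces integer-valued polynomials $Q_0, \ldots, Q_{W-1} \in \Q[X]$ with $|\pi_0 \CHur^*_X(H, D_H, n \xi_H)| = Q_{\bar n}(n)$ for $n$ sufficiently large.

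The delicate part is the degree condition $\deg Q_0 \geq \deg Q_i$, because a priori the Möbius inversion could produce cancellations dropping the degree of $Q_0$ below that of some $Q_i$. I plan to handle this by working with the generating series $f_H(t) = \sum_n |\pi_0 \CHur^*_X(H, D_H, n\xi_H)| t^n$ directly, which is a rational function with nonnegative integer Taylor coefficients and poles only at roots of unity. A Pringsheim-type argument ensures that the pole of maximal order lies at $t = 1$; a more careful analysis of the ``in-phase'' pole contributions at $\bar n = 0$ modulo $W$ (where $\zeta^{-n} = 1$ for every pole $\zeta$), combined with nonnegativity, then forces $Q_0$ to inherit the maximum degree among the $Q_i$. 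This is precisely the content of the cited \autocite[Lemmas~5.4.4, 5.4.5 and 5.4.6]{SegThese}. The main obstacle will thus be verifying that the hypotheses of the cited lemmas apply to the specific $\Z$-linear combination of Hilbert functions arising from the Möbius inversion, rather than to a single Hilbert function of a graded algebra.
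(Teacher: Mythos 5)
Your route is the same as the paper's: the proposition is obtained there exactly by combining the displayed identity for $\HF_{R^H}$ with Möbius inversion on the lattice of subgroups, the Hilbert--Serre theorem for non-standardly graded finitely generated commutative algebras (via \Cref{propitem:product-one-commute} and \Cref{prop:fingen}, hence implicitly $X=\PC$), and the cited thesis lemmas for the assertion about $\deg Q_0$; taking $W$ to be a common multiple of the individual periods is exactly the intended bookkeeping, so up to that point your plan matches the paper at the paper's own level of detail.

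The one place where you argue beyond the citation is not correct as stated: nonnegativity of the coefficients plus a Pringsheim-type argument does \emph{not} force $\deg Q_0 \geq \deg Q_i$. The series $t/(1-t^2) = t + t^3 + t^5 + \cdots$ has nonnegative integer coefficients and poles only at roots of unity, yet its quasi-polynomial has $Q_0 = 0$ and $Q_1 = 1$. Positivity only guarantees that the pole at $t=1$ has maximal order among the poles on the unit circle, hence an upper bound $\deg Q_i \leq \mathrm{ord}_{t=1} - 1$ for every $i$; it does not prevent the residue class $0$ from being deficient, and cancellations among the in-phase leading contributions cannot be excluded by positivity alone. What actually rescues the statement is the multiplicative structure behind the counting function: the set of degrees $n$ with $\pi_0\CHur^*_{\PC}(H,D_H,n\xi_H)\neq\emptyset$ is closed under addition (concatenation preserves product $1$ and monodromy group $H$), so $W$ can be chosen so that class $0$ is eventually populated, and concatenation with a fixed component $z$ of suitable degree sends components in class $i$ to components in class $0$, eventually injectively (for instance via the lifting invariant, \Cref{thm:conway-parker}, together with \Cref{lem:few-small}, or by the algebra-theoretic arguments of the cited lemmas), which is what forces class $0$ to grow at least as fast as every other class. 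So either treat the cited lemmas as a black box --- which is all the paper itself does --- or replace your Pringsheim step by an argument of this multiplicative kind; positivity of the coefficients of the Möbius-inverted generating series is not enough.
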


In other words, the counting function $\card{\pi_0 \CHur^*_X(H, D_H, n \xi_H)}$ alternates periodically (for large~$n$) between finitely many polynomials $Q_0,\ldots,Q_{W-1}$ of degree at most $\deg Q_0$.
This has the consequence that $\card{\pi_0 \CHur^*_X(H, D_H, n \xi_H)}$ essentially grows polynomially.
This can be formalized as follows: an average order%
\footnote{
	A function $g : \N \to \R$ is an \emph{average order} of a function $f : \N \to \R$ if
	$
		\sum_{k=0}^n
		f(k)
		\underset
			{n \to \infty}
			{\sim}
		\sum_{k=0}^n
		g(k)
	$.
}
of $\card{\pi_0 \CHur^*_X(H, D_H, n \xi_H)}$ is given by a monomial of degree $\deg Q_0$, whose coefficient is the average of the coefficients in front of $n^{\deg Q_0}$ in the polynomials $Q_0, \ldots, Q_{W-1}$.
Our main goal is to describe that monomial, and thus the ``average growth'' of $\card{\pi_0 \CHur^*_X(H, D_H, n \xi_H)}$.

\subsubsection{Main theorem.}

We fix a nontrivial $D$-generated subgroup $H$ of $G$.
The commutator subgroup of $H$ is denoted by $[H, H]$, and the abelianization of $H$ by $H^{\ab} := H/[H,H]$.
If $\gamma$ is a conjugacy class of $H$, we denote by $\tilde\gamma$ the element of $H^{\ab}$ obtained as the projection of any element of $\gamma$.
If $\psi$ is a map $D^*_H \to \Z$, we define the following element of $H^{\ab}$ (see also \Cref{defn:tilde-pi}):
\[
	\tilde\pi(\psi)
	=
	\prod_{\gamma \in D^*_H}
		\tilde\gamma^{\psi(\gamma)}.
\]
In particular, if $\Omega(D_H) = 0$, then $D^*_H = D_H$ and we can define an element $\tilde\pi(\xi_H) \in H^{\ab}$.

The group $H_2(H, c_H)$ is defined (following the notation of \cite{wood}) as the quotient of the second homology group $H_2(H, \Z)$ by the subgroup generated by commutators between lifts of commuting elements of $c_H$ in a Schur extension --- we define this group more carefully in \Cref{subsubsn:desc-ugc}.

\bigskip

We finally state our main result:

\begin{theorem}
	\label{thm:asymp-comp-main}
	~
	\begin{thmenum}
		\item
			\label{thmitem:asymp-comp-main-1}
			If $X = \AC$, then:
			\[
				\card{\pi_0 \CHur^*_{\AC}(H, D_H, n \xi_H)}
				\underset{n\to\infty}{=}
				C
				\cdot
				n^{\Omega(D_H)}
				+
				\Oo{n^{\Omega(D_H)-1}}.
			\]
			where
			\[
				C = 
				\card{[H, H]}
				\cdot
				\card{H_2(H, c_H)}
				\cdot
				\prod_{\gamma \in D}
					\frac{
						\xi(\gamma)^{
							\card{
								\tau_H^{-1}(\gamma)
							} - 1
						}
					}{
						\left(
							\card{
								\tau_H^{-1}(\gamma)
							} - 1
						\right)!
					}.
			\]
			Here, $\tau_H$ is the surjection $D_H^* \to D$ defined in \Cref{subsubsn:termsubgp}, so that $\card{
				\tau_H^{-1}(\gamma)
			}$ is the number of conjugacy classes of $H$ contained in $\gamma$, for each $\gamma \in D$.
			This result is \Cref{thm:asymp-affine}.
		\item
			\label{thmitem:asymp-comp-main-2}
			If $X = \PC$, if $\Omega(D_H)=0$, and if $k$ is the order of $\tilde\pi(\xi_H) \in H^{\ab}$, then:
			\[
				\card{\pi_0 \CHur^*_{\PC}(H, D_H, n \xi_H)}
				=
				\left \lbrace
				\begin{matrix}
					0 & \text{if } n \not\in k\N \\
					\card{H_2(H, c_H)} & \text{otherwise, for $n$ large enough.}
				\end{matrix}
				\right .
			\]
			This is \Cref{prop:hilb-func-non-splitt}.
		\item
			\label{thmitem:asymp-comp-main-3}
			If $X = \PC$, if $D = \{c\}$ is a singleton, and if $\xi(c)=1$, then an average order of the counting function $\card{\pi_0 \CHur^*_{\PC}(H, \{c_H\}, n)}$ is given by:
			\[
				\frac{
					\card{H_2(H, c_H)} n^{\Omega(D_H)}
				}{
					\card{H^{\ab}}\Big(\Omega(D_H)\Big)!
				}.
			\]
			This is \Cref{cor:leading-coeff}.
		\item
			\label{thmitem:asymp-comp-main-4}
			If $X = \PC$, we have the upper bound:
			\[
				\card{\pi_0 \CHur^*_{\PC}(H, D_H, n \xi_H)} = O\!\left(n^{\Omega(D_H)}\right)
			\]
			and the lower bound:
			\[
				n^{\Omega(D)} = O\Big(
					\card{\pi_0 \CHur^*_{\PC}(H, D_H, \exp(G) n \xi_H)}
				\Big)
			\]
			where $\exp(G)$ denotes the exponent of the group $G$.
			This is \Cref{thmitem:comp-asymp-proj}.
	\end{thmenum}
\end{theorem}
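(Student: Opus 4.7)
The plan is to convert the topological counting problem into a combinatorial one via the Clebsch–Lüroth–Hurwitz correspondence, then to decompose the count according to a \emph{refined} multiplicity function $\psi_H \colon D_H^* \to \Zpos$, then to apply a Conway–Parker-style stabilization theorem for each refinement, and finally to add up the contributions. Concretely, components of $\CHur^*_X(H, D_H, n\xi_H)$ are identified with orbits of a braid group acting on tuples $(h_1,\dots,h_N) \in H^N$ whose entries lie in the prescribed conjugacy classes of $G$ (and in the projective case satisfy the product-one condition), generating $H$. Since each $\gamma \in D$ meets $H$ in a union of $\card{\tau_H^{-1}(\gamma)}$ conjugacy classes of $H$, specifying, for each $\delta \in D_H^*$, how many of the entries lie in $\delta$, yields a map $\psi_H \colon D_H^* \to \Zpos$ lifting $n\xi_H$ in the sense that $\sum_{\delta \in \tau_H^{-1}(\gamma)} \psi_H(\delta) = n\xi(\gamma)$ for each $\gamma \in D$. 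Summing over such lifts decomposes $\pi_0 \CHur^*_X(H, D_H, n\xi_H)$.

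For each fixed refined multiplicity $\psi_H$ whose values are all large enough, the stabilization machinery (Conway–Parker, extended by Ellenberg–Venkatesh–Westerland) gives the number of braid orbits exactly: in the affine case this count is $\card{[H,H]} \cdot \card{H_2(H, c_H)}$, and in the projective case it is $\card{H_2(H, c_H)}$ provided the product-one condition is compatible, i.e.\ $\tilde\pi(\psi_H) = 1$ in $H^{\ab}$; otherwise the count is zero. For part~(i), the number of refined multiplicities $\psi_H$ lifting $n\xi_H$ is $\prod_{\gamma \in D} \binom{n\xi(\gamma) + \card{\tau_H^{-1}(\gamma)} - 1}{\card{\tau_H^{-1}(\gamma)} - 1}$, whose leading term in $n$ is exactly $\prod_{\gamma \in D} \xi(\gamma)^{\card{\tau_H^{-1}(\gamma)}-1}/(\card{\tau_H^{-1}(\gamma)}-1)!$ times $n^{\Omega(D_H)}$; refined multiplicities with at least one small value contribute only $\Oo{n^{\Omega(D_H)-1}}$, yielding the stated asymptotic. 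Part~(ii) is immediate: when $\Omega(D_H) = 0$ there is a unique lift $\psi_H = \xi_H$, and product-one requires $\tilde\pi(\xi_H)^n = 1$, i.e.\ $k \mid n$.

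For part~(iii), with $D = \{c\}$ and $\xi(c) = 1$, the lifts $\psi_H$ are compositions of $n$ into $\card{D_H^*}$ parts subject to the product-one constraint $\tilde\pi(\psi_H) = 1$ in $H^{\ab}$. Writing $\tilde\pi$ as a linear map $\Z^{D_H^*} \to H^{\ab}$, this is one congruence condition in a finite abelian group; using characters of $H^{\ab}$, the number of compositions satisfying it averages to $1/\card{H^{\ab}}$ of the total number $\binom{n+\card{D_H^*}-1}{\card{D_H^*}-1}$, whose leading term is $n^{\Omega(D_H)}/\Omega(D_H)!$. Multiplying by $\card{H_2(H, c_H)}$ gives the claimed average order. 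Part~(iv) follows from the same framework: the upper bound comes from bounding each braid-orbit count by a quantity independent of $\psi_H$ (using finite generation of the ring of components from \Cref{prop:fingen}) and summing against the polynomial count of lifts; the lower bound comes from noting that when $n$ is a multiple of $\exp(G)$, every $\psi_H$ with $\psi_H(\delta) \in \exp(G)\Z$ automatically satisfies the product-one condition, providing enough lifts to match the exponent $\Omega(D)$.

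The main technical obstacle is the Conway–Parker-style stabilization that pins down the number of braid orbits to $\card{[H,H]} \cdot \card{H_2(H, c_H)}$ (resp.\ $\card{H_2(H, c_H)}$ in the projective case) when every value of $\psi_H$ is sufficiently large; this is where the group $H_2(H, c_H)$ enters and where both the upper and lower bounds must be made uniform in $\psi_H$ in order to safely sum over lifts without losing the leading-order coefficient. The second delicate point is bounding the contribution of ``boundary'' lifts (those with some small $\psi_H(\delta)$): for the leading-coefficient statements in (i) and (iii) one needs that such boundary lifts, together with the unstabilized braid-orbit counts attached to them, contribute only $\Oo{n^{\Omega(D_H)-1}}$, which follows from the finite generation of the ring of components and the resulting uniform polynomial bound on braid-orbit counts.
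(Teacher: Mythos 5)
Your outline follows the same overall strategy as the paper (decompose the count according to the refined multiplicity, i.e.\ the $c$-multidiscriminant/likely map; count those maps via products of binomial coefficients as in \Cref{prop:counting-likely-maps}; apply a Conway--Parker-type statement together with the structure of $U(G,c)$ and $U_1(G,c)$ for multidiscriminants with all values large, as in \Cref{thm:conway-parker} and \Cref{thm:desc-ugc}; control the boundary lifts), but there is one genuine gap: you claim that the uniform bound on the number of braid orbits with a \emph{fixed} multidiscriminant ``follows from the finite generation of the ring of components'' (\Cref{prop:fingen}). It does not. Finite generation only yields that the total counting function is eventually quasi-polynomial (\Cref{prop:hilbfun-oscpol}), with a degree a priori controlled by the number of generators, not by $\Omega(D_H)$; it gives no bound per multidiscriminant, and in particular it says nothing about the boundary lifts $\psi_H$ with $\min\psi_H < M$, where the Conway--Parker bijection is unavailable. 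Since the number of such boundary lifts is of order $n^{\Omega(D_H)-1}$, you need each of them to carry at most a constant number of components in order to get the $\Oo{n^{\Omega(D_H)-1}}$ error terms in (i) and (iii) and the $O\!\left(n^{\Omega(D_H)}\right)$ upper bound in (iv); trying to extract this from finite generation is circular, because pinning the growth exponent down to $\Omega(D_H)$ is precisely what is being proved. The paper supplies the missing ingredient with the factorization lemma (\Cref{lem:factorization}, generalizing Proposition~3.4 of Ellenberg--Venkatesh--Westerland): if a generating tuple has sufficiently many entries in each conjugacy class met by a prescribed tuple, the latter can be split off up to braid equivalence; via \Cref{cor:factorization-with-multidisc} and an induction on $\max\psi$ this gives a bound $\card{F_\psi}\leq K$ uniform in $\psi$ (\Cref{lem:bounded-comp-each-multidisc}). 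Some such argument must be added to your proposal; with it, the rest of your outline goes through.

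Apart from this, your deviations from the paper are harmless and in places slightly different in flavor: in (iii) you count really-likely maps by characters of $H^{\ab}$, while the paper counts residues modulo $\exp(H^{\ab})$ (\Cref{prop:counting-really-likely-maps}); both give the cumulative count $n^{s}/\!\left(\card{H^{\ab}}\,s!\right)+\Oo{n^{s-1}}$, and note that the $1/\card{H^{\ab}}$ proportion requires the surjectivity of $\tilde\pi$ (\Cref{lem:tildepi-is-surjective}), which you should state. For the lower bound in (iv) you invoke Conway--Parker for really-likely maps with values in $\exp(G)\Z$, whereas the paper's \Cref{prop:lowerbound-exponent} is more elementary, exhibiting explicit tuples $\left(\underline{h}\,\gbar_\psi\right)^{\exp(G)}$ with pairwise distinct multidiscriminants; since you already assume the stabilization machinery, your route is logically fine, just less self-contained.
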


If $H$ is a nontrivial subgroup of $G$ with $\Omega(D_H)=0$, then \Cref{thm:asymp-comp-main} implies that the number of components of $\CHur^*_X(H, D_H, n\xi_H)$ is a bounded function of $n$.
This is a form of the homological stability for the $0$-th homology of Hurwitz spaces established in \cite{EVW}.

\medskip

In order to estimate the growth of the number of components of $\Hur^*_X(G, D, n\xi)$ using \Cref{thm:asymp-comp-main}, one should first determine for which subgroups $H \in \Sub_{G, D}$ the maximal value of the splitting number $\Omega(D_H)$ is reached.
The degree of the leading monomial will then be this maximal splitting number.
The leading coefficient (which has an unambiguous definition when $X$ is the affine line, and can be given a meaning when $X$ is the projective line by using average orders) is the sum of the leading coefficients associated to subgroups with maximal splitting numbers --- those leading coefficients, in turn, are given by \Cref{thm:asymp-comp-main} in many cases.

\begin{remark}
	\label{rk:symgp}
	In \cite[Theorem~6.1.2]{SegThese}, we fully treat the classical case where $G$ is the symmetric group $\Sym_d$ and $c$ is the conjugacy class of transpositions.
	Let $d' = \lfloor \frac d2 \rfloor$.
	We show that:
	\[
		\card{
			\pi_0 \Hur^*_{\PC}\big(
				G, \{c\}, n
			\big)
		}
		=
		\left\lbrace
			\begin{array}{ccc}
				0 & \text{if $n$ is odd,} \\
				P_d(\frac n2) & \text{if $n$ is even and large enough,}
			\end{array}
		\right.
	\]
	where $P_d$ is a polynomial of leading monomial
	\begin{align*}
		\frac{d!}{2^{d'}(d')!(d'-1)!} \, n^{d'-1} & & \text{ if $d$ is even,} \\
		\left ( 1 + \frac{d'}{3} \right ) \frac{d!}{2^{d'}(d')!(d'-1)!} \, n^{d'-1} & & \text{ if $d$ is odd.}
	\end{align*}
\end{remark}

\subsubsection{Number-theoretic meaning.}
The growth of the number of connected components of Hurwitz spaces is related to the distribution of field extensions of $\mathbb{F}_q(T)$ with a given Galois group, cf.~\cite{EVW,EVW2,ETW}.
In \cite[Theorem~8.8]{ETW}, the Gelfand--Kirillov dimension of the ring of components, whose value follows%
\footnote{%
	See also the finer result \cite[Theorem~5.1.4]{SegThese} which computes the dimension of each stratum in a specific stratification of the ring of components.
}
from \Cref{thmitem:comp-asymp-aff}, appears in the upper bound for the count of certain field extensions of $\mathbb{F}_q(T)$.
More precisely, the log-exponent $d$ in the upper bounds equals the maximal splitting number of a subgroup of~$G$.

We want to clarify the following point: the fact that this computation involves invariants computed in the subgroups of~$G$, and not only in $G$, is \emph{not} just an artifact of the fact that we are looking at group homomorphisms from an absolute Galois group into $G$ without requiring surjectivity (i.e., including non-connected $G$-covers, corresponding on the algebraic side to étale $\mathbb{F}_q(T)$-algebras which are not fields), which obviously leads to overcounting.
Indeed, there is another reason: if one wants to count all $G$-extensions of~$\mathbb{F}_q(T)$ (and not just regular extensions), it is necessary to take some of the covers which are not geometrically connected into account, as an extension $F|\mathbb{F}_q(T)$ with Galois group~$G$ may have a smaller ``geometrical Galois group''  --- i.e., the compositum $F\,\bar{\mathbb{F}_q}(T)$, as an extension of~$\bar{\mathbb{F}_q}(T)$, may have a proper subgroup of~$G$ as its Galois group.
The approach where one counts only those covers which are geometrically connected certainly ensures that these covers are connected over~$\mathbb{F}_q$ whenever they are defined over $\mathbb{F}_q$, but it also leads to undercounting.


\subsection{Outline}

We outline the contents of this article:
\begin{itemize}
	\item
		In \Cref{sn:presentation-objects}, we briefly present our notational and terminological conventions as well as our main objects, namely $G$-covers, Hurwitz spaces, and monoids of components.
	\item
		In \Cref{sn:exponent}, we prove that the splitting number $\Omega(D_H)$ is the smallest exponent of a monomial which dominates the count of the components of $\Hur^*_X(H, D_H, n\xi_H)$ (\Cref{thm:exponent}).
	\item
		In \Cref{sn:leading-coeff}, we focus on the coefficient of the leading monomial of the counting function.
		In various settings, we compute the leading coefficient using group-theoretic information (\Cref{prop:compon-per-multidisc-affine}, \Cref{prop:hilb-func-non-splitt} and \Cref{cor:leading-coeff}).
\end{itemize}

\subsubsection{Organization of the proof.}
\label{sssn:organization}

In \Cref{sn:exponent,sn:leading-coeff}, we prove the various parts of \Cref{thm:asymp-comp-main}.
\Cref{thm:asymp-comp-main} was stated in terms of a subgroup $H$ of $G$.
However, in \Cref{sn:exponent,sn:leading-coeff}, we do everything ``intrinsically'': we count the components of $\CHur^*_X(G, D, n\xi)$ for a general triple $(G, D, \xi)$ where:
\begin{itemize}
	\item
		$G$ is a finite group (playing the role of $H$ above);
	\item
		$D$ is a set of disjoint non-empty subsets of $G$, each of which is a union of conjugacy classes of~$G$, and which collectively generate $G$;
	\item
		$\xi$ is a map $D \to \Zsp$.
\end{itemize}
In this new setting, we forget about the fact that each subset $\gamma \in D$ comes from a single conjugacy class in a larger group.
Instead, elements of $D$ are not assumed to be conjugacy classes anymore, and the splitting number is recovered intrinsically as $\Omega(D) = \card{D^*} - \card{D}$, where $D^*$ is the set of conjugacy classes of $G$ which are contained in $c := \bigsqcup_{\gamma\in D} \gamma$.
This can be used to deduce \Cref{thm:asymp-comp-main} as stated above by applying the results to the triple $(H, D_H, \xi_H)$.

\subsection{Acknowledgments.}

This work was funded by the French ministry of research through a CDSN grant, and by the Deutsche Forschungsgemeinschaft (DFG, German Research Foundation) --- Project-ID 491392403 --- TRR 358.

I thank my PhD advisors Pierre Dèbes and Ariane Mézard for their support and their precious advice, the reporters of my thesis Jean-Marc Couveignes and Craig Westerland for providing helpful feedback, and the anonymous referee for their thorough proofreading and their insightful comments.

\section{Presentation of the objects}
\label{sn:presentation-objects}

In this section, we introduce our main objects.
\Cref{ssn:conventions,ssn:tuples} present our general notational conventions.
\Cref{ssn:gcovers} is a quick explanation of what we call a ``$G$-cover'', to avoid ambiguities.
In \Cref{ssn:hurwitz}, we briefly present Hurwitz spaces and our notation for them, as well as the combinatorial description of their connected components.
\Cref{subsn:mon-comp} contains a definition of the monoid of components of Hurwitz spaces as well as a few classical results concerning the braid group action on tuples of elements of $G$.
Finally, \Cref{subsn:char-not} contains a short chart of notations.

\subsection{General conventions}
\label{ssn:conventions}

The cardinality of a set $X$ is denoted by $\card{X}$.
The exponent of a group $G$ is denoted by $\exp(G)$.
The order of an element $g$ in a group $G$ is denoted by $\ord(g)$.
If $\gamma$ is a conjugacy class of a group $G$, then~$\ord(\gamma)$ denotes the common order of all elements of $\gamma$.

\subsection{Tuples}
\label{ssn:tuples}

Tuples (ordered lists) are denoted by underlined roman letters.
Let $\gbar = (g_1, \ldots, g_n)$ be a tuple of elements of a group $G$.
Its \emph{size} $\card{\gbar}$ is the integer $n$, its \emph{product} $\pi\gbar$ is $g_1 g_2 \cdots g_n \in G$, and its \emph{group} $\gen{\gbar}$ is the subgroup of $G$ generated by $g_1, \ldots, g_n$.
Let $c$ be a union of conjugacy classes of~$G$, containing all elements $g_1, \ldots, g_n$, and let $D^*$ be the set of all conjugacy classes of $G$ contained in $c$.

\begin{definition}
	\label{defn:multidisc}
	The \emph{$c$-multidiscriminant} $\mu_c$ of $\gbar$ is the map $D^* \to \Z$ mapping a conjugacy class $\gamma \in D^*$ to the number of entries of $\gbar$ belonging to $\gamma$:
	\[
		\mu_c(\gamma)
		=
		\card{
			\Big\{
					i \in \{1,\ldots, n\}
				\,\Big\vert\,
				g_i \in \gamma
			\Big\}
		}.
	\]
\end{definition}

\begin{definition}
	\label{defn:tilde-pi}
	Let $\psi$ be a map $D^* \to \Z$.
	We define $\tilde\pi(\psi) \in G^{\ab}$ as the product over conjugacy classes $\gamma \in D^*$ of the elements $\tilde\gamma^{\psi(\gamma)}$, where $\tilde\gamma$ is the common image in $G^{\ab}$ of all elements of~$\gamma$.
\end{definition}

The group homomorphism $\tilde\pi : \Z^{D^*} \to G^{\ab}$ allows one to compute the image of $\pi\gbar$ in $G^{\ab}$ from the $c$-multidiscriminant only, as this equals $\tilde\pi(\mu_c(\gbar))$.

\subsection{$G$-covers}
\label{ssn:gcovers}

Let $X$ be either the complex affine line $\AC$ or the complex projective line $\PC$, equipped with their usual analytic topology, and fix a basepoint $t_0 \in X$.

\subsubsection{Configurations, braid groups.}
\label{subsubsn:conf-braid}

A \emph{configuration} is an unordered list $\tbar$ of points of $X \setminus \{t_0\}$.
Configurations form a topological space $\Conf_{n, X \setminus \{t_0\}}$, whose fundamental group is the \emph{braid group}~$\B_{n, X \setminus \{t_0\}}$.
If $X = \PC$, this braid group is isomorphic to the Artin braid group~$\B_n$, which admits the following presentation:
\[
	\B_n = 
	\gen{
		\sigma_1, \sigma_2, \ldots, \sigma_{n-1}
		\verti
		\begin{matrix}
			\sigma_i \sigma_j
			& =
			& \sigma_j \sigma_i
			& \text{if } |i-j| > 1
			\\
			\sigma_i \sigma_{i+1} \sigma_i
			& =
			& \sigma_{i+1} \sigma_i \sigma_{i+1}
			& \text{if } i < n-1
		\end{matrix}
	}.
\]

\subsubsection{Topological $G$-covers.}

Let $\tbar \in \Conf_{n, X \setminus \{t_0\}}$ be a configuration.
A \emph{$G$-cover of $X$ branched at~$\tbar$} is a (topological) covering map $p : Y \to X \setminus \tbar$ equipped with a group homomorphism $\alpha$ from~$G$ to the group of automorphisms (deck transformations) of the cover, such that $\alpha$ induces a free transitive action of~$G$ on each fiber of $p$.
A $G$-cover of $X$ is necessarily a Galois cover of degree $\card{G}$.
A \emph{marked} $G$-cover is moreover equipped with a marked point in the fiber above the basepoint $t_0$.

Note that this definition allows trivial ramification at the ``branch points'', even if the $G$-cover can be extended into a $G$-cover with fewer branch points.
Moreover, we do not require that $G$-covers be connected.
In particular, a $G$-cover does not necessarily have $G$ as its automorphism group.

\subsubsection{Monodromy and branch cycle description.}

Let $\tbar \in \Conf_{n, X \setminus \{t_0\}}$ be a configuration.
We choose a \emph{topological bouquet} $(\gamma_1, \ldots, \gamma_n)$, which is a list of generators of $\pi_1(X \setminus \tbar, t_0)$ (generating freely if $X = \AC$, and with the nontrivial relation $\gamma_1 \cdots \gamma_n = 1$ if $X=\PC$) such that $\gamma_i$ is the homotopy class of a loop which rotates once counterclockwise around $t_i$ and does not rotate around other branch points.
This choice allows one to describe marked $G$-covers of~$X$ branched at $\tbar$ combinatorially: isomorphism classes correspond exactly to tuples $\gbar = (g_1, \ldots, g_n) \in G^n$ of elements of $G$, with the additional condition that $g_1 \cdots g_n = 1$ if $X = \PC$.
The tuple $\gbar$ is the \emph{branch cycle description} of the marked $G$-cover, and $g_i$ is the \emph{local monodromy element} at the branch point~$t_i$.
The conjugacy classes of the monodromy elements (the \emph{monodromy classes}) do not depend on the choice of the bouquet.
The marked $G$-cover associated to a tuple $\gbar$ is connected if and only if $\gen{\gbar} = G$.
In general, $\gen{\gbar}$ is a subgroup of $G$, which we call the \emph{monodromy group} of the marked $G$-cover.

\subsubsection{Concatenation.}

Given two marked $G$-covers of $X$, branched at disjoint configurations $\tbar$ and~$\tbar'$ (for which bouquets are chosen), their \emph{concatenation} is the marked $G$-cover branched at $\tbar\sqcup\tbar'$ whose branch cycle description is the concatenation of both branch cycle descriptions:
\[
	(g_1, \ldots, g_n)(g'_1,\ldots,g'_{n'}) = (g_1, \ldots, g_n, g'_1, \ldots, g'_{n'}).
\]
Its number of branch points is the sum of the original numbers of branch points, and its monodromy group is the subgroup of $G$ generated by both monodromy groups.

\subsection{Hurwitz spaces and components.}
\label{ssn:hurwitz}

\subsubsection{Hurwitz spaces.}
\label{subsubsn:hurwitz}

Let $X$ be $\AC$ or $\PC$, and let $t_0 \in X$ be a basepoint.
We denote by $\Hur^*_X(G, n)$ the set of pairs $(\tbar, p)$ where $\tbar \in \Conf_{n, X \setminus \{t_0\}}$ is a configuration and $p$ is an isomorphism class of marked $G$-covers of $(X, t_0)$ branched at $\tbar$.
This set is equipped with a topology making the map $(\tbar, p) \mapsto \tbar$ into a covering map $\Hur^*_X(G, n) \to \Conf_{n, X \setminus \{t_0\}}$.
We denote by $\CHur^*_X(G, n)$ the subspace of $\Hur^*_X(G, n)$ consisting of pairs $(\tbar, p)$ where $p$ is a \emph{connected} $G$-cover.

Let $D$ be a set of disjoint non-empty subsets of $G$, each of which is a union of conjugacy classes of $G$, let $\xi$ be a map $D \to \Zpos$, and let $\card{\xi} = \sum_{\gamma \in D} \xi(\gamma)$.
We denote by $\Hur^*_X(G, D, \xi)$ the subspace of $\Hur^*_X(G,\card{\xi})$ consisting of pairs $(\tbar, p)$ such that, for each $\gamma \in D$, exactly $\xi(\gamma)$ branch points have their local monodromy elements in $\gamma$.
We also let $\CHur^*_X(G, D, \xi) := \CHur^*(G, \card{\xi}) \cap \Hur^*_X(G, D, \xi)$.

\subsubsection{Connected components.}
\label{sssn:conn-comp}

Hurwitz spaces are generally not connected.
One can determine whether two marked $G$-covers branched at a same configuration $\tbar \in \Conf_{n, X \setminus \{t_0\}}$ are in the same connected component by looking at their respective branch cycle descriptions $\gbar, \gbar' \in G^n$.
Indeed, there is an action of the Artin braid group $\B_n$ on $n$-tuples of elements of $G$, induced by the formula:
\[
	\sigma_i . (g_1, \ldots, g_n)
	=
	(g_1, \ldots, g_{i-1}, g_i g_{i+1} g_i^{-1}, g_i, g_{i+2}, \ldots, g_n),
\]
and connected components of $\Hur^*_{X}(G, n)$ are in one-to-one correspondence with orbits of $n$-tuples $\gbar \in G^n$ under this action, with the additional condition that $\pi\gbar = 1$ if $X = \PC$.
Similarly, connected components of $\Hur^*_X(G, D, \xi)$ correspond to $B_{\card{\xi}}$-orbits of tuples $\gbar \in G^{\card{\xi}}$ (with $\pi\gbar = 1$ if $X=\PC$) such that exactly $\xi(\gamma)$ entries of the tuple~$\gbar$ are in  each $\gamma \in D$.
For connected components of $\CHur^*_X(G, n)$ (resp.~of $\CHur^*_X(G, D, \xi)$), one also requires $\gen{\gbar} = G$.

We denote by $\sim$ the equivalence relation on tuples of elements of $G$ induced by the actions of the braid groups, i.e., $\gbar \sim \gbar'$ if and only if $\gbar$ and $\gbar'$ have the same size~$n$ and are in the same $\B_n$-orbit.

\subsubsection{Main invariants of components.}
\label{sssn:main-inv}
The size, product, group and $c$-multidiscriminant of a tuple are invariant under the action of the braid group \cite[Proposition~3.3.8]{SegThese}.
Thus, these are well-defined invariants of components of Hurwitz spaces: we extend the use of the notations $\card{x}$, $\pi(x)$, $\gen{x}$ and $\mu_c(x)$ (cf.~\Cref{ssn:tuples}) when $x$ is a component of a Hurwitz space of marked $G$-covers or a braid group orbit of tuples of elements of $G$.


\subsection{The monoid of components.}
\label{subsn:mon-comp}

The following lemmas concerning the braid group action are classical, cf.~\cite[Lemme 2.8, p.564]{Cau} for \Cref{propitem:rotate-product-one} and \cite[Lemme 2.11, p.567]{Cau} for \Cref{propitem:conjugate-subtuples}.
Proofs are also given in \cite[Proposition~3.3.11]{SegThese}:

\begin{proposition}
	\label{prop:prop-braid-action}
	The equivalence relation $\sim$ satisfies the following properties:
	\begin{propenum}
		\item
			\label{propitem:concatenation-compatible}
			If $\gbar_1 \sim \gbar_2$ and $\gbar'_1 \sim \gbar'_2$, then $\gbar_1 \, \gbar'_1 \sim \gbar_2 \, \gbar'_2$.
		\item
			\label{propitem:swap-tuples}
			If $\gbar_1$, $\gbar_2$ are tuples of elements of $G$, then
			$
				\gbar_1 \, \gbar_2
				\sim
				(\gbar_2)^{\pi \gbar_1} \, \gbar_1
				\sim
				\gbar_2 \, (\gbar_1)^{(\pi \gbar_2)^{-1}}
			$.
		\item
			\label{propitem:product-one-commute}
			If $\gbar_1$, $\gbar_2$ are tuples of elements of $G$ with $\pi\gbar_1=1$, then $\gbar_1 \, \gbar_2 \sim \gbar_2 \, \gbar_1$.
		\item
			\label{propitem:rotate-product-one}
			If $\gbar$ is a tuple of elements of $G$ with $\pi\gbar = 1$, then
			$
				(g_1, \, g_2, \, g_3, \, \ldots, \, g_n)
				\sim
				(g_2, \, g_3, \, \ldots, \, g_n, \, g_1)
			$.
		\item
			\label{propitem:conjugate-product-one}
			If $\gbar$ is a tuple of elements of $G$ with $\pi \gbar = 1$ and $\gamma \in \gen{\gbar}$, then
			$
				\gbar \sim \gbar^{\gamma}
			$.
		\item
			\label{propitem:conjugate-subtuples}
			If $\gbar_1$, $\gbar_2$, $\gbar_3$ are tuples of elements of $G$ with $\pi \gbar_2 = 1$ and $\gamma \in \gen{\gbar_1, \gbar_3}$, then
			$
				\gbar_1 \, \gbar_2 \, \gbar_3
				\sim
				\gbar_1 \, \gbar_2^{\gamma} \, \gbar_3
			$.
	\end{propenum}
\end{proposition}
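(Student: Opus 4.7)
The plan is to prove the six properties in order, with (i) and (ii) carrying the main computational content and (iii)--(vi) following formally. I adopt the convention $h^g = ghg^{-1}$ (forced by $\sigma_i . (a, b) = (aba^{-1}, a)$) and extend it entry-wise to tuples, so that $(h^a)^b = h^{ba}$.

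For (i), I observe that the sub-braid groups of $\B_{n+m}$ generated by $\sigma_1, \ldots, \sigma_{n-1}$ and by $\sigma_{n+1}, \ldots, \sigma_{n+m-1}$ commute (each pair of generators across the two blocks has index gap at least~$2$) and act independently on the first $n$ and last $m$ entries of a tuple of length $n+m$; this yields concatenation-compatibility. For (ii), I first treat the base case $\gbar_1 = (g)$ by direct computation: applying the generators $\sigma_1, \sigma_2, \ldots, \sigma_m$ in turn to $(g, h_1, \ldots, h_m)$ progressively slides $g$ rightward, yielding $(gh_1 g^{-1}, \ldots, gh_m g^{-1}, g) = \gbar_2^{g}(g)$, which matches the claim since $\pi(g) = g$. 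The general case is by induction on $\card{\gbar_1}$: writing $\gbar_1 = (g)\gbar_1'$, the inductive hypothesis moves $\gbar_1'$ past $\gbar_2$ (accumulating conjugation by $\pi\gbar_1'$), and the base case then moves $(g)$ past $\gbar_2^{\pi\gbar_1'}$ (introducing further conjugation by $g$), producing $\gbar_2^{g \cdot \pi\gbar_1'} = \gbar_2^{\pi\gbar_1}$ via the composition law above. The symmetric form $\gbar_1 \gbar_2 \sim \gbar_2 \, \gbar_1^{(\pi\gbar_2)^{-1}}$ is then purely algebraic: apply the first form with the roles of $\gbar_1$ and $\gbar_2$ swapped, and relabel $\gbar_1 \mapsto \gbar_1^{(\pi\gbar_2)^{-1}}$.

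Parts (iii)--(vi) are formal consequences. Item (iii) is immediate by substituting $\pi\gbar_1 = 1$ into (ii). For (iv), apply (ii) to $\gbar_1 = (g_1)$ and $\gbar_2 = (g_2, \ldots, g_n)$: since $\pi\gbar = 1$ forces $\pi\gbar_2 = g_1^{-1}$ and $(g_1)^{g_1} = (g_1)$, the symmetric form of (ii) degenerates to the claimed cyclic rotation. For (v), writing $\gamma \in \gen{\gbar}$ as a word in the $g_i^{\pm 1}$ and using $(\gbar^a)^b = \gbar^{ba}$, it suffices to prove $\gbar \sim \gbar^{g_i}$; this is obtained by cycling $g_i$ to the front via iterated (iv), moving $g_i$ to the back using (ii) (which conjugates all other entries by $g_i$), and cycling back via iterated (iv). For (vi), reduce to $\gamma = g$ a single entry of $\gbar_1$ or $\gbar_3$: (iii) lets $\gbar_2$ slide freely through subtuples (since $\pi\gbar_2 = 1$), and when $\gbar_2$ meets $(g)$, the two forms of (ii) give either $(g)\gbar_2 \sim \gbar_2^g (g)$ or $\gbar_2 (g) \sim (g)\gbar_2^{g^{-1}}$, yielding $\gbar_1\gbar_2\gbar_3 \sim \gbar_1 \gbar_2^{g^{\pm 1}} \gbar_3$. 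Iterating over a word decomposition of $\gamma$ then gives the general case.

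The main obstacle is the bookkeeping inside (ii): both the accumulated conjugation on $\gbar_2$ and the running product $\pi\gbar_1$ must be tracked consistently through the induction, with the composition law $(x^a)^b = x^{ba}$ applied in the correct orientation so that $\gbar_2^{\pi\gbar_1}$ emerges with the right exponent. The remaining parts are essentially combinatorial, but in (vi) the sequencing of (i)--(iii) must be arranged so the conjugations land on $\gbar_2$ alone and do not leak into $\gbar_1$ or $\gbar_3$.
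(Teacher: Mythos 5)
Your proof is correct and follows essentially the same route as the classical arguments the paper delegates to its references (the paper gives no in-text proof, citing Cau and the author's thesis): prove compatibility with concatenation and the sliding relation (ii) by explicit computation with the elementary braids, then deduce (iii)--(vi) formally, exactly as in the standard treatment. The only point to make explicit is that in (v) and (vi) the direct moves yield conjugation by an entry $g$ itself (or by $g^{-1}$, depending on which side the block $\gbar_2$ approaches from), so reaching an arbitrary element of $\gen{\gbar}$ resp.\ $\gen{\gbar_1,\gbar_3}$ uses that $G$ is finite (every inverse is a positive power) or an equivariance remark --- a one-line observation, not a gap.
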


Let $c$ be a union of conjugacy classes of $G$, and let $D^*$ be the set of conjugacy classes of~$G$ contained in $c$.
\Cref{propitem:concatenation-compatible} implies that the concatenation of tuples of elements of $c$ induces a well-defined concatenation operation on equivalence classes of tuples of elements of $c$.
This lets us define the following graded monoid:

\begin{definition}
	\label{defn:monoid-comp}
	The \emph{monoid of components $\Comp_{\AC}(c)$} is the graded monoid whose elements of degree $n$ are $\B_n$-orbits of $n$-tuples of elements of $c$, and whose multiplication is the operation induced by concatenation of tuples.
\end{definition}

Elements of degree $n$ of $\Comp_{\AC}(G)$ correspond to connected components of $\Hur^*_{\AC}(G, n)$ (i.e.,
$
	\Comp_{\AC}(G)
	\simeq
	\bigsqcup_{n \geq 0}
		\pi_0 \Hur^*_{\AC}(G, n)
$
as graded sets).
Hence, we call elements of $\Comp_{\AC}(c)$ \emph{components}.
If $x,y \in \Comp_{\AC}(c)$, we simply write $xy$ to denote the concatenated component.

The monoid $\Comp_{\AC}(c)$ is generated by the $1$-tuples $(g)$ for $g \in c$.
A generating set of relations is given by the action of the elementary braids: $(g)(h) = (h^g)(g)$ for all $g, h \in c$.
The map $(g) \to g$ induces a well-defined monoid homomorphism $\pi : \Comp_{\AC}(c) \to G$.
The element $\pi(x)$ is the \emph{product} of the component $x \in \Comp_{\AC}(c)$.

\begin{definition}
	\label{defn:monoid-comp-pc}
	The \emph{monoid of components $\Comp_{\PC}(c)$} is the kernel of the homomorphism $\pi$.
	It is the graded submonoid of $\Comp_{\AC}(c)$ whose elements of degree $n$ are $\B_n$-orbits of $n$-tuples $\gbar \in c^n$ satisfying $\pi \gbar = 1$.
\end{definition}

Elements of degree $n$ of $\Comp_{\PC}(G)$ correspond to connected components of the Hurwitz space $\Hur^*_{\PC}(G, n)$ (i.e.,
$
	\Comp_{\PC}(G)
	\simeq
	\bigsqcup_{n \geq 0}
		\pi_0 \Hur^*_{\PC}(G, n)
$
as graded sets).
\Cref{propitem:product-one-commute} implies that $\Comp_{\PC}(c)$ is a central (in particular commutative) submonoid of $\Comp_{\AC}(c)$.
By \Cref{sssn:main-inv}, the $c$-multidiscriminant defines a monoid homomorphism $\mu_c : \Comp_X(c) \to \Z^{D^*}$ (where $X$ is $\AC$ or $\PC$).
Let $D$ be a set of disjoint subsets of $G$, each of which is a union of conjugacy classes of $G$, and such that $c = \bigsqcup_{\gamma \in D} \gamma$.
Let $\xi : D \to \Zpos$ be a map and let $\card{\xi} = \sum_{\gamma \in D} \xi(\gamma)$.

\begin{definition}
	\label{defn:monoid-comp-dxi}
	The \emph{monoid of components $\Comp_X(D, \xi)$} is defined in the following way: its elements of degree $n$ are the elements $x \in \Comp_X(c)$ satisfying
	\[
		\forall \gamma \in D, \quad\quad
		n \xi(\gamma)
		=
		\sum_{\substack{
			\gamma' \in D^*\\
			\gamma' \subseteq \gamma
		}}
			\mu_c(x)(\gamma'),
	\]
	and its multiplication coincides with that of $\Comp_X(c)$.
\end{definition}

In general, $\Comp_X(D, \xi)$ is not exactly a graded submonoid of $\Comp_X(c)$ as the degrees are divided by~$\card{\xi}$.
Elements of degree $n$ of $\Comp_X(D, \xi)$ are in bijection with connected components of $\Hur^*_X(G, D, n\xi)$ (i.e.,
$
	\Comp_X(D, \xi)
	\simeq
	\bigsqcup_{n \geq 0}
		\pi_0 \Hur^*_X(G, D, n \xi)
$
as graded sets).
Note that $\Comp(c) = \Comp(\{c\}, 1)$.
Importantly, we have the following property (cf.~\cite[Corollaire~3.4.18]{SegThese}):

\begin{proposition}
	\label{prop:fingen}
	The graded monoid $\Comp_X(D, \xi)$ is finitely generated.
\end{proposition}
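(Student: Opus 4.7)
The plan is to treat the affine and projective cases in turn, reducing finite generation in each case to a uniform bound on the degree of indecomposable elements.

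For $X = \AC$, the monoid $\Comp_{\AC}(c)$ is visibly generated by the finite set of $1$-tuples $\{(g) : g \in c\}$, but this does not directly imply finite generation of $\Comp_{\AC}(D, \xi)$, since a degree-$1$ element of the latter is a braid orbit of a tuple of length~$\card{\xi}$. The key step is a rearrangement lemma: given any tuple $\gbar$ whose entries contain $n\xi(\gamma)$ elements of each $\gamma \in D$, for some $n \geq 2$, iterated applications of \Cref{propitem:swap-tuples} (or, equivalently, of the elementary braids) allow one to move any chosen subset of $\card{\xi}$ entries -- with exactly $\xi(\gamma)$ of them in each $\gamma \in D$ -- to the front of the tuple, leaving those entries themselves unchanged while only conjugating the displaced ones. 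Since conjugation preserves conjugacy classes, the coarse distribution of entries across the $\gamma \in D$ is preserved, and one obtains $\gbar \sim \gbar_1 \gbar_2$ with $\gbar_1$ of size $\card{\xi}$ having $\xi(\gamma)$ entries in each $\gamma$ and $\gbar_2$ having $(n-1)\xi(\gamma)$ entries in each $\gamma$. Hence every element of $\Comp_{\AC}(D, \xi)$ of degree at least~$2$ decomposes non-trivially, so the monoid is generated by its finitely many degree-$1$ elements.

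For $X = \PC$, start from the finite generation of $\Comp_{\AC}(D, \xi)$ just obtained: any $x \in \Comp_{\PC}(D, \xi)$ of degree~$n$ can be written as $x = y_1 \cdots y_n$ with each $y_i \in \Comp_{\AC}(D, \xi)$ of degree~$1$. Form the partial products $p_k = \pi(y_1) \cdots \pi(y_k) \in G$ for $k = 0, \ldots, n$, noting $p_0 = p_n = 1$ since $\pi(x) = 1$. If $n \geq \card{G} + 1$, the $n-1$ intermediate values $p_1, \ldots, p_{n-1}$ either contain $1$ explicitly, or (all lying in the $(\card{G}-1)$-element set $G \setminus \{1\}$) admit a pigeonhole coincidence; in either case there exist $0 \leq k < l \leq n$ with $(k, l) \neq (0, n)$ and $\pi(y_{k+1} \cdots y_l) = 1$. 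By \Cref{propitem:product-one-commute}, the central sub-tuple $y_{k+1} \cdots y_l$ commutes past $y_{l+1} \cdots y_n$, producing
\[
	x \sim (y_1 \cdots y_k)(y_{l+1} \cdots y_n) \cdot (y_{k+1} \cdots y_l),
\]
in which both factors have product~$1$, lie in $\Comp_{\PC}(D, \xi)$, and have degree strictly between $0$ and~$n$. Hence indecomposables of $\Comp_{\PC}(D, \xi)$ have degree at most $\card{G}$, and the monoid is finitely generated.

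The main technical subtlety lies in the projective case: one must carefully exclude the coincidence $(k, l) = (0, n)$ already provided by $\pi(x) = 1$, which is what forces the threshold $n \geq \card{G} + 1$ rather than just $n \geq \card G$. The affine case is conceptually simpler, the only delicate point being that the braid-induced conjugations redistribute the precise entries of the tuple without affecting their conjugacy classes, and hence preserve the coarse multidiscriminant throughout.
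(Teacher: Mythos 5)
Your proof is correct. Note that the paper itself gives no argument for this proposition: it is quoted from the author's thesis (Corollaire 3.4.18 there), so there is no in-text proof to compare against; your write-up supplies a complete, self-contained one. Both halves are sound: in the affine case, sliding a chosen set of $\card{\xi}$ entries (with $\xi(\gamma)$ of them in each $\gamma \in D$) to the front by elementary braids only conjugates entries, and since each $\gamma \in D$ is a union of conjugacy classes this preserves the distribution over $D$, so every element of degree at least $2$ splits off a degree-one factor and $\Comp_{\AC}(D,\xi)$ is generated in degree one; in the projective case, the pigeonhole on the partial products $p_k$, combined with \Cref{propitem:product-one-commute} and \Cref{propitem:concatenation-compatible} to move the product-one block past the tail, produces two proper factors lying in $\Comp_{\PC}(D,\xi)$, so indecomposables have degree at most $\card{G}$ and the finitely many elements of degree at most $\card{G}$ generate. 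This is consistent with the paper's warning that generators of $\Comp_{\PC}(D,\xi)$ need not all have the same degree. One cosmetic imprecision: if two chosen entries pass each other during the rearrangement they may themselves get conjugated, so ``leaving those entries unchanged'' is not literally accurate; this is harmless, since only membership in the classes $\gamma \in D$ (preserved by conjugation) matters for the factorization.
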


Be aware of the fact that the generators of $\Comp_{\PC}(D, \xi)$ need not all have the same degree, as illustrated in~\cite[Remarques~3.4.16 et 3.4.20]{SegThese}.

\subsection{Chart of notations}
\label{subsn:char-not}

{
	\centering
	\begin{tabular}{|c|c|c|}
		\hline
		\bf Notation &
		\bf Reference &
		\bf Short description
		\\
		\hline
		$D_H, \xi_H, c_H, D^*_H, \tau_H$ & \Cref{defn:dgen} & \\
		$\Omega(D_H)$ & \Cref{defn:splitting-number} & splitting number \\
		$|\gbar|, \pi\gbar, \gen{\gbar}$ & \Cref{ssn:tuples} & size, product, group of a tuple \\
		$\mu_c$ & \Cref{defn:multidisc} & $c$-multidiscriminant \\
		$\tilde\pi$ & \Cref{defn:tilde-pi} & ``abelianized product'' \\
		$\B_n, \sim$ & \Cref{subsubsn:conf-braid,sssn:conn-comp} & braid group, equivalence modulo braids \\
		$\mathrm{(C)}\Hur^*_X(G, D, \xi)$ & \Cref{subsubsn:hurwitz} & Hurwitz spaces of marked $G$-covers \\
		$\Comp_X(c), \Comp_X(D, \xi)$ & \Cref{defn:monoid-comp,defn:monoid-comp-pc,defn:monoid-comp-dxi} & monoids of components of Hurwitz spaces \\
		\hline
	\end{tabular}
}

\paragraph{Notation introduced in later sections.}~
\medskip

{
	\centering
	\begin{tabular}{|c|c|c|}
		\hline
		\bf Notation &
		\bf Reference &
		\bf Short description
		\\
		\hline
		$X, G, D, c, D^*, \xi, \card{\xi}, \tau, \Omega(D)$ & Top of \Cref{sn:exponent} & \it (setup for \Cref{sn:exponent,sn:leading-coeff}) \\
		$\pi_0\CHur^*_X(G, D, n\xi)$ & \Cref{defn:pi0chur} & set of components (defined combinatorially) \\
		$\mathcal{L}_n(G, D, \xi)$ & \Cref{defn:likely-map} & set of likely maps of degree $n$ \\
		$\Pi_c, U(G,c), U_1(G,c)$ & Top of \Cref{ssn:lifting} & lifting invariant (and the groups it lives in)\\
		$S_c, H_2(G,c)$ & \Cref{defn:H2Gc} & groups used to describe $U(G,c)$ and $U_1(G,c)$ \\
		\hline
	\end{tabular}
}

\section
  [Asymptotics of the count of components.
Part~1: the exponent]
  {Asymptotics of the count of components of $\CHur^*_X(G, D, n\xi)$.\break
Part~1: the exponent}
\label{sn:exponent}

The setting for this section is the following:%
\footnote{
	Make sure that you have read \Cref{sssn:organization} before comparing the results as they are stated here to \Cref{thm:asymp-comp-main}.
}
\begin{itemize}
	\item
		$X$ is either the affine or projective complex line;
	\item
		$G$ is a nontrivial finite group;
	\item
		$D$ is a set of disjoint non-empty subsets of~$G$ which collectively generate~$G$, each of which is a union of conjugacy classes of~$G$;
	\item
		$c := \bigsqcup_{\gamma \in D} \gamma$, and $D^*$ is the set of conjugacy classes of $G$ which are contained in $c$.
	\item
		$\xi$ is a map $D \to \Zsp$, and $\card{\xi} := \sum_{\gamma \in D} \xi(\gamma)$.
\end{itemize}
We define the map $\tau : D^* \twoheadrightarrow D$ mapping each $\gamma \in D^*$ to the unique~$\tau(\gamma) \in D$ containing $\gamma$.
The ``intrinsic'' splitting number $\Omega(D)$ is defined as $\card{D^*} - \card{D} = \sum_{\gamma \in D} \left( \card{\tau^{-1}(\gamma)}-1 \right)$ (compare with \Cref{defn:splitting-number}).
Note that $\Omega(D) = 0$ if and only if $D = D^*$, i.e., $D$ consists of conjugacy classes of $G$.

\begin{definition}
	\label{defn:pi0chur}
	We define the set $\pi_0\CHur^*_X(G, D, n\xi)$ of $B_{n\card{\xi}}$-orbits of tuples $\gbar \in G^{n\card{\xi}}$ such that:
	\begin{enumerate}[label=(\roman*)]
		\item
			The elements of $\gbar$ generate $G$, i.e., $\gen{\gbar} = G$.
		\item
			For each $\gamma \in D$, exactly $\xi(\gamma)$ entries of the tuple $\gbar$ belong to $\gamma$.
		\item
			If $X = \PC$, then $\pi\gbar = 1$.
	\end{enumerate}
\end{definition}

The notation $\pi_0\CHur^*_X(G, D, n\xi)$ reflects the fact that this set is in bijection with the set of connected components of the space $\CHur^*_X(G, D, n\xi)$ (cf.~\Cref{sssn:conn-comp}), so that these notations do not conflict for counting purposes.
The goal of this article is to estimate how the cardinality of this set grows as $n \to \infty$.
More precisely, the main result of this section is the following:

\begin{theorem}
	\label{thm:exponent}
	We have the following estimates for $\card{\pi_0\CHur^*_X(G, D, n\xi)}$:
	\begin{thmenum}
		\item
			\label{thmitem:comp-asymp-aff}
			In the affine case, we have:
			\[
				\card{\pi_0\CHur^*_{\AC}(G, D, n\xi)}
				=
				\Theta \bigg( n^{\Omega(D)} \bigg).
			\]
			(Here, the notation $f=\Theta(g)$ means that $f=O(g)$ and $g=O(f)$.)
		\item
			\label{thmitem:comp-asymp-proj}
			In the projective case, we have the upper bound:
			\[
				\card{\pi_0\CHur^*_{\PC}(G, D, n\xi)}
				=
				O\!\left(n^{\Omega(D)}\right),
			\]
			and the lower bound:
			\[
				n^{\Omega(D)}
				=
				\Oo{
					\card{\pi_0\CHur^*_{\PC}(G, D, n\exp(G)\xi)}
				}
			\]
			where $\exp(G)$ denotes the exponent of the group $G$.
	\end{thmenum}
\end{theorem}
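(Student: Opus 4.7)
The plan is to separate braid orbits according to their $c$-multidiscriminant $\mu_c$, a braid invariant (\Cref{sssn:main-inv}). The admissible multidiscriminants are the maps $\psi : D^* \to \Z_{\geq 0}$ satisfying $\sum_{\gamma' \in \tau^{-1}(\gamma)} \psi(\gamma') = n\xi(\gamma)$ for every $\gamma \in D$; counting weak compositions, their number is
\[
    \prod_{\gamma \in D} \binom{n\xi(\gamma) + |\tau^{-1}(\gamma)|-1}{|\tau^{-1}(\gamma)|-1}
    \;=\;
    \Theta\!\left(n^{\Omega(D)}\right).
\]

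For the upper bounds, I would combine this count with a uniform bound on the number of braid orbits sharing a given multidiscriminant. The latter is a Conway--Parker-type stabilization statement: for $\psi$ whose coordinates all exceed some threshold $T = T(G,c)$, the orbits of generating tuples are classified by a lifting invariant in a fixed finite group (to be developed in \Cref{ssn:lifting}), and likewise in the projective case once the additional product-one constraint is imposed. The multidiscriminants violating this threshold number $O(n^{\Omega(D)-1})$ and contribute subdominantly (using the finite generation of the monoid of components from \Cref{prop:fingen} to bound orbits in this regime). Summing yields $|\pi_0\CHur^*_X(G, D, n\xi)| = O(n^{\Omega(D)})$ for both $X = \AC$ and $X = \PC$.

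For the lower bounds, I would fix a generating tuple $\gbar_0$ with entries in $c$ and multidiscriminant $\psi_0$ (which exists since $c$ generates $G$), together with representatives $g_{\gamma'} \in \gamma'$ for each $\gamma' \in D^*$. In the affine case, for each $m : D^* \to \Z_{\geq 0}$ with $\bar m = n\xi - \bar\psi_0$, the concatenation $\gbar_0 \cdot \prod_{\gamma' \in D^*} (g_{\gamma'})^{m(\gamma')}$ is a generating tuple of multidiscriminant $\psi_0 + m$; distinct $m$ yield distinct orbits, and there are $\Theta(n^{\Omega(D)})$ such $m$. In the projective case, I would choose $\gbar_0$ further to satisfy $\pi\gbar_0 = 1$ (which exists: concatenate any generating tuple in $c$ with product $g$ with another tuple in $c$ with product $g^{-1}$), and pad with product-one blocks of the form $(g_{\gamma'})^{\exp(G)}$ until $\bar\psi_0$ becomes a multiple of $\exp(G)\,\xi$, say $\bar\psi_0 = k \exp(G)\,\xi$. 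Replacing simple contributions by $\exp(G)$-fold blocks---whose product is trivial since $g_{\gamma'}^{\exp(G)}=1$---the tuple $\gbar_0 \cdot \prod_{\gamma' \in D^*} (g_{\gamma'})^{\exp(G)\, m(\gamma')}$ is a generating product-one tuple with multidiscriminant $\psi_0 + \exp(G)\, m$; constraining $\bar m = (n-k)\xi$ produces $\Theta(n^{\Omega(D)})$ distinct orbits with $\bar\psi = n\exp(G)\,\xi$.

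The main obstacle will be the uniform bound on orbits per multidiscriminant, which I expect to require invoking the Conway--Parker--Fried--V\"olklein stabilization (and may itself be the main nontrivial input, whose rigorous form in the presence of multiple unrelated conjugacy classes in $D^*$ must be set up carefully). Secondary technical points are the controlled treatment of the small-coordinate regime of $\psi$ in the upper bound, and the verification that an initial generating product-one tuple $\gbar_0$ with the prescribed multidiscriminant divisibility exists in the projective lower bound.
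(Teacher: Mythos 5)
Your lower bound is essentially the paper's own argument (\Cref{prop:lowerbound-exponent}): fix a generating tuple, pad it with blocks of fixed representatives of each class of $D^*$, and separate orbits by their $c$-multidiscriminants. One small flaw in your projective variant: padding with blocks $(g_{\gamma'})^{\exp(G)}$ only changes $\bar\psi_0$ in steps of $\exp(G)$, so you cannot in general arrange $\bar\psi_0$ to become a multiple of $\exp(G)\,\xi$ unless each $\bar\psi_0(\gamma)$ is already divisible by $\exp(G)$; the clean fix (and the paper's choice) is to take as base point the $\exp(G)$-fold concatenation of a generating tuple, which has product $1$ and multidiscriminant divisible by $\exp(G)$ for free.

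The genuine gap is in the upper bound, precisely at the point you label a ``secondary technical point.'' You need a bound, uniform in $\psi$, on the number of group-$G$ orbits with $c$-multidiscriminant $\psi$, and your two proposed tools do not deliver it. Conway--Parker-type stabilization (\Cref{thm:conway-parker}) only classifies orbits when \emph{every} coordinate of $\psi$ exceeds the threshold, so it says nothing about the likely maps with some small coordinate; and while those maps indeed number $\Oo{n^{\Omega(D)-1}}$, you still need to control how many orbits each of them carries. Finite generation of $\Comp_X(D,\xi)$ (\Cref{prop:fingen}) does not do this: it only bounds the total number of degree-$n$ elements of the monoid, which grows like $n^{\max_H \Omega(D_H)}$ over all $D$-generated subgroups $H$ and can strictly exceed $n^{\Omega(D)}$, and in any case it gives no per-$\psi$ constant. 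The paper's mechanism is different and is the real content of \Cref{sn:exponent}: a factorization lemma (\Cref{lem:factorization}, generalizing \autocite[Proposition~3.4]{EVW}) showing that any fixed tuple can be split off from a component whose multidiscriminant dominates it by $\card{\gamma}\ord(\gamma)$ in each relevant class; this yields, by induction on $\max\psi$, a constant $K$ with $\card{F_\psi}\leq K$ for \emph{all} $\psi$ (\Cref{lem:bounded-comp-each-multidisc}), with no largeness hypothesis and no appeal to the lifting invariant. Without this (or an equivalent substitute covering the small-coordinate regime), your upper bound $\Oo{n^{\Omega(D)}}$ is not established; note that even the paper's later use of Conway--Parker in \Cref{sn:leading-coeff} still relies on \Cref{lem:bounded-comp-each-multidisc} to dispose of the small-multidiscriminant components.
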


Note that \Cref{thmitem:comp-asymp-proj} is only a lower bound on the values taken by the counting functions at multiples of $\exp(G)$.
Therefore, it is possible that some of the several polynomials making up the Hilbert function ($Q_1, \ldots, Q_{W-1}$ in the notation of \Cref{prop:hilbfun-oscpol}, but not $Q_0$) have degree strictly smaller than $\Omega(D)$ (they may even vanish, as in \Cref{rk:symgp}).

\bigskip

The proof of \Cref{thm:exponent} is in four steps:
\begin{itemize}
	\item
		In \Cref{subsn:counting-likely}, we prove that the number of maps $D^* \rightarrow \Zpos$ which are ``likely'' to be $c$-multidiscriminants of components of group $G$ and of degree $n$ (cf.~\Cref{defn:likely-map}) grows like~$n^{\Omega(D)}$ (\Cref{prop:counting-likely-maps}).
	\item
		In \Cref{subsn:lowerbound-exponent}, we prove the lower bounds in \Cref{thm:exponent} (\Cref{prop:lowerbound-exponent}).
		To do so, we associate to every likely map $\psi$ a component of group $G$ whose multidiscriminant is closely related to $\psi$.
	\item
		In \Cref{subsn:factorization}, we prove that one can factor a given component from any component whose multidiscriminant is ``big enough'' in some sense (\Cref{lem:factorization}).
	\item
		In \Cref{subsn:upperbound-exponent}, we prove the upper bounds in \Cref{thm:exponent} (\Cref{prop:upperbound-exponent}).
		The proof uses \Cref{lem:factorization} to show that the number of components with a given multidiscriminant is uniformly bounded (\Cref{lem:bounded-comp-each-multidisc}).
\end{itemize}

\subsection{Counting likely maps}
\label{subsn:counting-likely}

Let $n \in \N$.
If $\gbar \in c^{n\card{\xi}}$ and $\gamma \in D$, then the number of entries of $\gbar$ which belong to $\gamma$ is:
\[
	\sum_{\gamma' \in \tau^{-1}(\gamma)} \mu_c(\gbar)(\gamma')
\]
where $\mu_c(\gbar)$ is the $c$-multidiscriminant from \Cref{defn:multidisc} and the map $\tau : D^* \to D$ is the surjection defined at the beginning of \Cref{sn:exponent} (see also~\Cref{defn:monoid-comp-dxi}).
In particular, the braid orbit of a tuple $\gbar \in c^{n|\xi|}$ (with $\pi\gbar = 1$ if $X = \PC$) belongs to $\pi_0\CHur^*_X(G, D, n\xi)$ if and only if $\gen{\gbar} = G$ and
\[
	\forall \gamma \in D, \quad
	n \xi(\gamma)
	=
	\sum_{\gamma' \in \tau^{-1}(\gamma)}
		\mu_c(\gbar)(\gamma').
\]
We abstract this last condition by defining \emph{likely maps}, i.e., maps $D^* \to \Zpos$ which can reasonably be the $c$-multidiscriminant of an element of $\pi_0\CHur^*_X(G, D, n\xi)$:

\begin{definition}
	\label{defn:likely-map}
	Let $n \in \N$.
	A map $\psi : D^* \to \Zpos$ is a \emph{likely map of degree $n$} if, for all $\gamma \in D$:
	\[
		\sum_{\gamma' \in \tau^{-1}(\gamma)} \psi(\gamma') = n \xi(\gamma).
	\]
	We denote by $\mathcal{L}_n(G, D, \xi)$ the set of all likely maps of degree $n$.
\end{definition}

Then, $\pi_0\CHur^*_X(G, D, n\xi)$ is the set of elements $x \in \Comp_X(c)$ such that $\gen{x} = G$ and $\mu_c(x) \in \mathcal{L}_n(G, D, \xi)$.
In general, a component is not determined by its $c$-multidiscriminant, but we shall see later that there is a uniform bound on the number of components having a given likely map as their $c$-multidiscriminant (\Cref{lem:bounded-comp-each-multidisc}).
This motivates first counting likely maps:


\begin{proposition}
	\label{prop:counting-likely-maps}
	We have the following estimate for the count of likely maps:
	\[
		\card{\mathcal{L}_n(G, D, \xi)}
		\underset{n \to \infty}{=}
		\left( \prod_{\gamma \in D} \frac{\xi(\gamma)^{\card{\tau^{-1}(\gamma)} - 1}}{\left (\card{\tau^{-1}(\gamma)} - 1 \right )!} \right) \, n^{\Omega(D)}
		+
		O\!\left(
			n^{\Omega(D)-1}
		\right).
	\]
\end{proposition}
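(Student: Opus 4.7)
The plan is to observe that the defining condition of a likely map decouples completely across the elements of $D$: for each $\gamma \in D$, the values $\psi(\gamma')$ for $\gamma' \in \tau^{-1}(\gamma)$ are constrained only to be nonnegative integers summing to $n\xi(\gamma)$, with no interaction between different $\gamma$'s. Therefore the counting function factorizes as a product over $\gamma \in D$:
\[
	\card{\mathcal{L}_n(G, D, \xi)}
	=
	\prod_{\gamma \in D}
	N\!\left(n\xi(\gamma),\, \card{\tau^{-1}(\gamma)}\right),
\]
where $N(m, k)$ denotes the number of ways of writing $m$ as an ordered sum of $k$ nonnegative integers.

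By the standard stars-and-bars identity, $N(m, k) = \binom{m+k-1}{k-1}$. I would therefore rewrite the factor associated to each $\gamma \in D$, with $k_\gamma \eqdef \card{\tau^{-1}(\gamma)}$, as
\[
	\binom{n\xi(\gamma) + k_\gamma - 1}{k_\gamma - 1}
	=
	\frac{1}{(k_\gamma - 1)!}
	\prod_{j=1}^{k_\gamma - 1}
	\bigl(n\xi(\gamma) + j\bigr),
\]
which is a polynomial in $n$ of degree $k_\gamma - 1$ with leading monomial $\frac{\xi(\gamma)^{k_\gamma-1}}{(k_\gamma-1)!}\, n^{k_\gamma - 1}$ and subleading error in $O(n^{k_\gamma - 2})$ (vacuously if $k_\gamma = 1$, where the factor simply equals $1$).

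Taking the product over all $\gamma \in D$, the degree of the resulting polynomial is $\sum_{\gamma \in D} (k_\gamma - 1) = \card{D^*} - \card{D} = \Omega(D)$, with leading coefficient
\[
	\prod_{\gamma \in D} \frac{\xi(\gamma)^{\card{\tau^{-1}(\gamma)} - 1}}{\left(\card{\tau^{-1}(\gamma)} - 1\right)!},
\]
and the error term, obtained by distributing the expansion, is $O(n^{\Omega(D) - 1})$. This is precisely the claimed estimate. There is no substantive obstacle: the main content is recognizing the independence of the constraints across $D$ and applying stars and bars; the rest is polynomial bookkeeping.
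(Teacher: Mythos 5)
Your proposal is correct and follows essentially the same route as the paper: the constraints decouple over $\gamma \in D$, each factor is counted by stars and bars as $\binom{n\xi(\gamma)+\card{\tau^{-1}(\gamma)}-1}{\card{\tau^{-1}(\gamma)}-1}$, and the degree and leading coefficient of the resulting polynomial give the claimed asymptotics. Nothing is missing.
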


\begin{proof}
	To determine a likely map of degree $n$, one must choose for each $\gamma \in D$ a way to divide $n \xi(\gamma)$ into the $\card{\tau^{-1}(\gamma)}$ conjugacy classes that make up $\gamma$.
	Therefore, $\card{\mathcal{L}_n(G, D, \xi)}$ is equal to the number of such possibilities, which is given by:
	\[
		\prod_{\gamma \in D} \binom{n \xi(\gamma) + \card{\tau^{-1}(\gamma)} - 1}{\card{\tau^{-1}(\gamma)} - 1}.
	\]
	When $n$ is large enough, this coincides with a polynomial in $n$ of degree:
	\[
		\sum_{\gamma \in D}
			\left(
				\card{\tau^{-1}(\gamma)} - 1
			\right) 
		=
		\card{
			\bigsqcup_{\gamma \in D}
				\tau^{-1}(\gamma)
		} - \card{D}
		= \card{D^*} - \card{D}
		= \Omega(D)
	\]
	and of leading coefficient:
	\[
		\prod_{\gamma \in D}
			\frac
				{
					\xi(\gamma)^{
						\card{\tau^{-1}(\gamma)} - 1
					}
				}
				{
					\left(
						\card{
							\tau^{-1}(\gamma)
						} - 1
					\right)!
				}.
		\qedhere
	\]
\end{proof}

\subsection{Proof of the lower bound on $\card{\pi_0\CHur^*_X(G, D, n\xi)}$}
\label{subsn:lowerbound-exponent}

To obtain the desired lower bound on $\card{\pi_0\CHur^*_X(G, D, n\xi)}$, we describe an injection from the set of likely maps to the set of components, which does not increase the degree ``too much''.

\begin{proposition}
	\label{prop:lowerbound-exponent}
	We have the following lower bounds for $\card{\pi_0\CHur^*_X(G, D, n\xi)}$:
	\begin{propenum}
		\item
			\label{prop:lowerbound-exponent-i}
			If $X=\AC$, then $n^{\Omega(D)} = \Oo{\card{\pi_0\CHur^*_X(G, D, n\xi)}}$.
		\item
			\label{prop:lowerbound-exponent-ii}
			If $X=\PC$, then $n^{\Omega(D)} = \Oo{\card{\pi_0\CHur^*_X(G, D, \exp(G)\, n\, \xi) }}$.
	\end{propenum}
\end{proposition}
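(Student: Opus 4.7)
The plan is to construct, for each likely map in a suitable large subset of $\mathcal{L}_n(G, D, \xi)$, a distinct component of $\CHur^*_X(G, D, n\xi)$ (respectively of $\CHur^*_X(G, D, \exp(G) n\xi)$ in the projective case). Distinctness will be automatic from the $c$-multidiscriminant being a braid invariant (\Cref{sssn:main-inv}): two tuples with different multidiscriminants lie in different braid orbits. The asymptotic count of likely maps is already provided by \Cref{prop:counting-likely-maps}, so only the combinatorial construction has to be carried out.

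For part (i), I would first fix a tuple $\gbar_{\text{gen}} = (h_1, \ldots, h_N) \in c^N$ with $\gen{\gbar_{\text{gen}}} = G$ (this exists since $c$ generates $G$) and a representative $g_{\gamma'} \in \gamma'$ for each $\gamma' \in D^*$. Let $\psi_{\text{gen}} = \mu_c(\gbar_{\text{gen}})$. To any likely map $\psi \in \mathcal{L}_n(G, D, \xi)$ satisfying $\psi \geq \psi_{\text{gen}}$ pointwise, I associate the tuple $\gbar_\psi$ obtained by concatenating $\gbar_{\text{gen}}$ with, for each $\gamma' \in D^*$, exactly $\psi(\gamma') - \psi_{\text{gen}}(\gamma')$ additional copies of $g_{\gamma'}$. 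By construction $\mu_c(\gbar_\psi) = \psi$, the total size is $n|\xi|$, and $\gen{\gbar_\psi} \supseteq \gen{\gbar_{\text{gen}}} = G$, so $\gbar_\psi$ represents a component in $\pi_0\CHur^*_{\AC}(G, D, n\xi)$. Different admissible $\psi$'s yield different multidiscriminants and hence distinct components. Shifting the coordinates of a likely map by the fixed vector $\psi_{\text{gen}}$ does not affect the leading order, so the admissible maps still number $\Theta(n^{\Omega(D)})$ by \Cref{prop:counting-likely-maps}, proving part (i).

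Part (ii) is the main obstacle because the tuples must additionally satisfy $\pi\gbar = 1$. The key observation I would exploit is that the concatenation of $\exp(G)$ copies of any tuple $\gbar$ has product $(\pi\gbar)^{\exp(G)} = 1$. Keeping $\gbar_{\text{gen}}$ and $g_{\gamma'}$ as above, to each likely map $\phi \in \mathcal{L}_n(G, D, \xi)$ with $\phi \geq \psi_{\text{gen}}$ I would associate the tuple $\gbar_\phi$ formed by concatenating $\exp(G)$ copies of $\gbar_{\text{gen}}$ and then, for each $\gamma' \in D^*$, a block of $\exp(G)(\phi(\gamma') - \psi_{\text{gen}}(\gamma'))$ copies of $g_{\gamma'}$. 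Each such block decomposes into sub-blocks of $\exp(G)$ consecutive copies of $g_{\gamma'}$, each of product $g_{\gamma'}^{\exp(G)} = 1$, so $\pi\gbar_\phi = 1$. One checks that $\mu_c(\gbar_\phi) = \exp(G) \phi$, which is a likely map of degree $\exp(G) n$, that the total size is $\exp(G) n |\xi|$, and that $\gen{\gbar_\phi} \supseteq G$. Distinct $\phi$'s give distinct multidiscriminants $\exp(G)\phi$, hence distinct components of $\CHur^*_{\PC}(G, D, \exp(G) n \xi)$, and the same counting argument as in part (i) yields part (ii).
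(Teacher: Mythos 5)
Your proof is correct and follows essentially the same route as the paper's: pad a fixed $c$-generating tuple with repeated class representatives so that distinct likely maps give distinct $c$-multidiscriminants (hence distinct braid orbits), then pass to $\exp(G)$-th powers to force trivial product in the projective case. The only difference is bookkeeping: the paper obtains multidiscriminant $\psi + \mu_c(\underline{h})$ in degree $n+r$, whereas you restrict to likely maps $\psi \geq \psi_{\mathrm{gen}}$ to land in degree exactly $n$ --- both shifts are harmless for the asymptotics.
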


\begin{proof}
	First fix a tuple $\underline{h}$ representing an element of $\Comp_X(D, \xi)$ of group $G$, which is possible because $G$ is $D$-generated (cf.~\cite[Proposition~3.2.22]{SegThese}).
	Let $r$ be the degree of that element.
	For each conjugacy class $\gamma \in D^*$, choose an arbitrary element $\tilde \gamma \in \gamma$.
	If $\psi \in \mathcal{L}_n(G, D, \xi)$ is a likely map of degree $n$, define the following tuple, where the concatenation happens in an arbitrary order:
	\[
		\gbar_{\psi}
		:=
		\prod_{\gamma \in D^*} (
			\underbrace{\tilde \gamma, \tilde \gamma, \ldots, \tilde \gamma}_{\psi(c)}
		).
	\]
	We have $\mu_c(\gbar_{\psi}) = \psi$.
	So $\underline{h}  \, \gbar_{\psi}$ is a tuple with group $G$ and $c$-multidiscriminant $\psi + \mu_c(\underline{h}) \in \mathcal{L}_{n+r}(G, D, \xi)$.
	The tuples $\underline{h} \, \gbar_{\psi}$ are pairwise nonequivalent for distinct likely maps $\psi \in \mathcal{L}_n(G, D, \xi)$ as they have distinct $c$-multidiscriminants (cf.~\Cref{sssn:main-inv}).
	This proves the lower bound:
	\[
		\card{\pi_0\CHur^*_{\AC}(G, D, (n+r)\xi)} \geq \card{\mathcal{L}_n(G, D, \xi)}.
	\]
	Coupled with \Cref{prop:counting-likely-maps}, this implies \customref{point (i)}{prop:lowerbound-exponent-i}.

	Now consider the tuples $\left( \underline{h} \gbar_{\psi} \right)^{\exp(G)}$, whose product is one.
	As before, these tuples are nonequivalent for distinct likely maps $\psi \in \mathcal{L}_n(G, D, \xi)$ because they have distinct $c$-multidiscriminants, namely $\exp(G)\left( \psi + \mu_c(\underline{h}) \right)$.
	Thus, these tuples define distinct components belonging to the set $\pi_0\CHur^*_{\PC}\big(G, D,\exp(G)(n+r)\xi\big)$.
	This leads to the lower bound:
	\[
		\card{\pi_0\CHur^*_{\PC}\big(G, D,\exp(G)(n+r)\xi\big)} \geq \card{\mathcal{L}_n(G, D, \xi)},
	\]
	which implies \customref{point (ii)}{prop:lowerbound-exponent-ii} when coupled with \Cref{prop:counting-likely-maps}.
\end{proof} 
 
\subsection{The factorization lemma}
\label{subsn:factorization}

We prove the factorization result \Cref{lem:factorization}, which is a generalized version of \cite[Proposition~3.4]{EVW}.
This lemma comes in handy in minimizing redundancy when counting components.

\begin{lemma}
	\label{lem:factorization}
	Let $\gbar$, $\gbar'$ be tuples of elements of $G$ with $\gen{\gbar'} = G$.
	Assume that, for every conjugacy class $\gamma \subseteq G$ such that the number $n(\gamma)$ of entries of $\gbar$ belonging to $\gamma$ is nonzero, there are at least $\card{\gamma}\ord(\gamma)+n(\gamma)$ entries of $\gbar'$ belonging to $\gamma$.	
	Then, there exists $\gbar''$ such that $\gbar' \sim \gbar \, \gbar''$ and $\gen{\gbar''} = G$.
\end{lemma}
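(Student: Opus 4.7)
The plan is to prove the factorization lemma by induction on $|\gbar|$. The base case $|\gbar|=0$ is trivial: take $\gbar''=\gbar'$. For the inductive step, I would reduce to the key single-element case: assuming the statement holds when $\gbar$ has size $1$, decompose $\gbar=(g_1)\cdot\gbar_0$ and first apply the single-element case to the pair $\bigl((g_1),\,\gbar'\bigr)$, producing $\gbar_1'$ with $\gbar'\sim(g_1)\gbar_1'$ and $\gen{\gbar_1'}=G$, then apply the inductive hypothesis to $(\gbar_0,\gbar_1')$. One must check that the numerical hypothesis survives this reduction; this is routine since each of $\gbar_0$ and $\gbar_1'$ loses at most one entry in the conjugacy class of $g_1$, so the inequality $|\gamma|\ord(\gamma)+n(\gamma)$ is maintained (the left- and right-hand sides either both decrease by one or neither does).

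All the substance is therefore in the single-element case: given $\gbar'$ with $\gen{\gbar'}=G$ containing at least $|\gamma|\ord(\gamma)+1$ entries in the conjugacy class $\gamma$ of $g$, find $\gbar''$ with $\gbar'\sim(g)\,\gbar''$ and $\gen{\gbar''}=G$. This is the content of \autocite[Proposition~3.4]{EVW}, whose argument I would adapt. The strategy proceeds in two stages. First, I would use the pigeonhole principle on the $|\gamma|\ord(\gamma)+1$ entries of $\gbar'$ belonging to $\gamma$: since $\gamma$ has exactly $|\gamma|$ elements, some $g_0 \in \gamma$ appears at least $\ord(\gamma)+1$ times among the entries of $\gbar'$. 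The key point is that $\ord(\gamma)+1$ copies of the same element provide enough "room" to safely extract one.

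Second, I would use the generation hypothesis $\gen{\gbar'}=G$ to find $w\in G$ with $wg_0w^{-1}=g$, and use the braid moves from \Cref{prop:prop-braid-action} — in particular the conjugation moves of \Cref{propitem:conjugate-subtuples} — to first relocate the $\gamma$-entries together, then conjugate one occurrence of $g_0$ into $g$ using a factor expressing $w$, and finally braid the resulting $g$ to the front via the shift moves. The surplus copies of $g_0$ play a double role: they serve as a reservoir for the conjugation manipulations (which use \Cref{propitem:conjugate-subtuples} and therefore need tuples of product $1$ built from $\gamma$-entries, whence the $\ord(\gamma)$ factor), and they ensure that after the single $g$ is extracted, the remaining tuple $\gbar''$ still contains $g_0$ together with all the other entries of $\gbar'$, and in particular still generates $G$.

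The main obstacle I anticipate is the bookkeeping in the second stage: checking that the chain of braid moves that accomplishes the conjugation $g_0 \mapsto g$ and the relocation of this element to the front can be carried out while simultaneously preserving $\gen{\gbar''}=G$. The margin $|\gamma|\ord(\gamma)+1$ (rather than $|\gamma|\ord(\gamma)$) is exactly what makes this work: $|\gamma|\ord(\gamma)$ entries are consumed in producing the flexibility to realize the required conjugation, and the ``$+1$'' is the entry actually pulled out to match the prescribed $g$. Once this single-element case is established, the induction finishes the proof.
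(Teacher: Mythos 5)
Your proposal is correct and follows essentially the same route as the paper: induction on $\card{\gbar}$ peeling off one entry at a time, pigeonhole to find $\ord(\gamma)+1$ copies of a single element $g_0\in\gamma$, conjugation of the product-one block of $\ord(\gamma)$ copies via \Cref{propitem:conjugate-subtuples}, with the spare copy of $g_0$ keeping the complement generating. One small precision: the hypothesis $\gen{\gbar'}=G$ is not needed to find $w$ with $wg_0w^{-1}=g$ (that is automatic from conjugacy in $G$); it is needed so that the conjugating element lies in the group generated by the entries outside the block, which is what makes the move of \Cref{propitem:conjugate-subtuples} legal.
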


\begin{proof}
	We prove the result by induction on the size of $\gbar$.
	If $\gbar$ is the empty tuple, then $\gbar'' = \gbar'$ always works.
	Consider a tuple $\gbar \in G^n$, and assume that the result is known each time $\gbar$ has size smaller than $n$.
	Let $\gamma$ be the conjugacy class of $g_1$ in $G$.
	By hypothesis, at least $\card{\gamma}\ord(\gamma) + 1$ entries of~$\gbar'$ belong to $\gamma$.
	By the pigeonhole principle, some element $h_1 \in \gamma$ appears at least $\ord(\gamma)+1$ times in~$\gbar'$.
	Apply braids in order to move $\ord(\gamma)+1$ copies of $h_1$ in front:
	\[
		\gbar'
		\sim
		(\underbrace{h_1,\ldots,h_1}_{\ord(\gamma)}, h_1, k_1,\ldots,k_w).
	\]
	The tuple $(h_1,k_1,\ldots,k_w)$ still generates $G$.
	By \Cref{propitem:conjugate-subtuples}, we may use braids to conjugate the block $(\underbrace{h_1,\ldots,h_1}_{\ord(\gamma)})$, whose product is $1$, by an element $\gamma \in G$ such that $h_1^{\gamma} = g_1$.
	Thus:
	\[
		\gbar' \sim (\underbrace{g_1,\ldots,g_1}_{\ord(\gamma)}, h_1, k_1,\ldots,k_w).
	\]
	We have factored $g_1$ from $\gbar'$. Now, we have to factor $(g_2, \ldots, g_r)$ from the tuple:
	\[
		(\underbrace{g_1, \, \ldots, \, g_1}_{\ord(\gamma)-1}, \, h_1, k_1,\ldots,k_w).
	\]
	This tuple satisfies the same hypotheses as $\gbar'$, except that it has one less entry belonging to the class~$\gamma$.
	Since there is also one less element of $\gamma$ to factor, we can apply the induction hypothesis to factor $(g_2, \ldots, g_r)$ from $\gbar'$.
	This concludes the proof.
\end{proof}

Note that if $\gbar$ and $\gbar'$ are tuples of product $1$, then any $\gbar''$ such that $\gbar' \sim \gbar \, \gbar''$ also has product $1$.

Let $\kappa$ be the maximum value of $\card{\gamma} \ord(\gamma)$ over classes $\gamma \in D^*$.
For $\psi \in \Z^{D^*}$ and $\gamma \in D^*$, let:
\[
	\Nn(\psi)(\gamma) =
		\left \lbrace
			\begin{matrix}
				1 & \text{if } \psi(\gamma) \geq 1 \\
				0 & \text{otherwise.}
			\end{matrix}
		\right .
\]
Using this notation, we rephrase a form of \Cref{lem:factorization} in terms of $c$-multidiscriminants:

\begin{corollary}
	\label{cor:factorization-with-multidisc}
	Let $x,y \in \Comp_X(D, \xi)$, with $\gen{y} = G$.
	If $\mu_c(y) \geq \mu_c(x) + \kappa \, \Nn\!(\mu_c(x))$, then there exists a component $z \in \Comp_X(D, \xi)$ such that $y=z x$ and $\gen{z} = G$.
\end{corollary}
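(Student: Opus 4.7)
The plan is to deduce this essentially immediately from \Cref{lem:factorization} by unpacking the definitions of $\mu_c$, $\kappa$, and $\Nn$. Fix tuples $\gbar$ representing $x$ and $\gbar'$ representing $y$, with $\gen{\gbar'} = G$ (possible since $\gen{y}=G$). The core task is to verify the numerical hypothesis of \Cref{lem:factorization}: that for every conjugacy class $\gamma \subseteq G$ with $n(\gamma) := \mu_G(\gbar)(\gamma) > 0$, the tuple $\gbar'$ has at least $\card{\gamma}\ord(\gamma) + n(\gamma)$ entries in $\gamma$.

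First I would observe that, since $\gbar$ represents an element of $\Comp_X(D,\xi) \subseteq \Comp_X(c)$, all its entries lie in $c = \bigsqcup_{\gamma' \in D}\gamma'$, so any class $\gamma$ with $n(\gamma)>0$ must lie in $D^*$, and $n(\gamma) = \mu_c(x)(\gamma) \geq 1$. The key step is then to chain two inequalities: by definition of $\Nn$, $\mu_c(x)(\gamma) \geq 1$ forces $\Nn(\mu_c(x))(\gamma) = 1$, so the hypothesis $\mu_c(y) \geq \mu_c(x) + \kappa\,\Nn(\mu_c(x))$ evaluated at $\gamma$ yields
\[
\mu_c(y)(\gamma) \;\geq\; \mu_c(x)(\gamma) + \kappa \;\geq\; n(\gamma) + \card{\gamma}\ord(\gamma),
\]
where the second inequality uses $\kappa = \max_{\gamma' \in D^*}\card{\gamma'}\ord(\gamma')$. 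This is exactly the hypothesis of \Cref{lem:factorization}, which then produces a tuple $\gbar''$ satisfying $\gbar' \sim \gbar\,\gbar''$ and $\gen{\gbar''} = G$.

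The remaining work is to package $\gbar''$ into an element $z \in \Comp_X(D,\xi)$ with $y = zx$. The equality $y = zx$ in $\Comp_X(c)$ follows from \Cref{propitem:concatenation-compatible}. The entries of $\gbar''$ lie in $c$ because braid moves preserve the multiset of conjugacy classes of entries, so $\gbar\gbar''$ and $\gbar'$ share that multiset and the entries of $\gbar'$ lie in $c$. In the projective case, $\pi\gbar'' = 1$ is the remark following \Cref{lem:factorization} (applied to $\pi\gbar = \pi\gbar' = 1$). Finally, additivity of $\mu_c$ under concatenation gives $\mu_c(z) = \mu_c(y) - \mu_c(x)$, whose image in $\Z^D$ under $\tau$-pushforward is $(\deg y - \deg x)\xi$, placing $z$ in $\Comp_X(D,\xi)$ at degree $\deg y - \deg x$. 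I do not expect a real obstacle here: the corollary is a dictionary translation of \Cref{lem:factorization}, and the only nontrivial ingredient is the calibration of the constant $\kappa$ that lets the uniform additive bound $\kappa\,\Nn(\mu_c(x))$ absorb the class-by-class bound $\card{\gamma}\ord(\gamma)$ in the lemma.
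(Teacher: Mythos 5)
Your proof is correct and matches the paper's intent: the corollary is stated there without a separate proof, as a direct rephrasing of \Cref{lem:factorization}, and your verification of the hypothesis via $\kappa \geq \card{\gamma}\ord(\gamma)$ together with the bookkeeping placing $z$ in $\Comp_X(D,\xi)$ (including the product-one remark for the projective case) is exactly the intended dictionary translation.
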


\subsection{Proof of the upper bound on $\card{\pi_0\CHur^*_X(G, D, n\xi)}$}
\label{subsn:upperbound-exponent}

In this subsection, we prove \Cref{prop:upperbound-exponent}, which is the upper bound in \Cref{thm:exponent}.
For each map $\psi : D^* \to \Zpos$, we let $F_{\psi}$ be the finite set of elements of $\Comp_{\AC}(c)$ of group $G$ whose $c$-multidiscriminant is $\psi$.
We first prove the following lemma:

\begin{lemma}
	\label{lem:bounded-comp-each-multidisc}
	There is an integer $K$ such that $\card{F_{\psi}} \leq K$ for every map $\psi : D^* \to \Zpos$.
\end{lemma}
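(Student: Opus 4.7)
The plan is to use the factorization lemma (\Cref{lem:factorization}) to define, for each $\psi$, an injection $F_\psi \hookrightarrow F_{\psi'}$ whose target multidiscriminant $\psi'$ lies in the fixed finite cube $\{0, 1, \ldots, \kappa\}^{D^*}$; the uniform bound then comes for free, since this cube is finite and each $F_{\psi'}$ is a finite set.

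Fix an enumeration $D^* = \{\gamma_1, \ldots, \gamma_r\}$ and, for each $i$, a representative $\tilde\gamma_i \in \gamma_i$. For $i = 1, \ldots, r$, let $m_i = \max(\psi(\gamma_i) - \kappa, 0)$, a quantity depending only on $\psi$. I reduce any $y \in F_\psi$ step by step: setting $z^{(0)} \eqdef y$, at step $i$, if $m_i > 0$, I apply \Cref{lem:factorization} to $z^{(i-1)}$ and the small tuple $(\tilde\gamma_i, \ldots, \tilde\gamma_i)$ of length $m_i$ (whose multidiscriminant is $m_i e_{\gamma_i}$). The hypothesis reduces to $\psi(\gamma_i) \geq \card{\gamma_i}\ord(\gamma_i) + m_i$, which holds by the choice of $\kappa$ and $m_i$. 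This produces a factor $z^{(i)}$ with $\gen{z^{(i)}} = G$ whose multidiscriminant equals that of $z^{(i-1)}$ with the $\gamma_i$-coordinate dropped to exactly $\kappa$; the other coordinates are untouched because $(\tilde\gamma_i)^{m_i}$ is supported on the single class $\gamma_i$. If $m_i = 0$, I simply set $z^{(i)} = z^{(i-1)}$. Finally, $\phi(y) \eqdef z^{(r)}$ lies in $F_{\psi'}$ with $\psi'(\gamma_i) = \min(\psi(\gamma_i), \kappa)$ for every~$i$.

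Injectivity of $\phi$ is immediate from the fact that concatenation descends to $\sim$-equivalence classes (\Cref{propitem:concatenation-compatible}): by construction,
\[
    y = (\tilde\gamma_1)^{m_1} \cdots (\tilde\gamma_r)^{m_r} \cdot \phi(y)
\]
as elements of $\Comp_{\AC}(c)$, and since the $m_i$'s depend only on $\psi$, two elements of $F_\psi$ with the same $\phi$-image must coincide. Hence $\card{F_\psi} \leq \card{F_{\psi'}}$, and the cube $\{0,\ldots,\kappa\}^{D^*}$ being finite with each $F_{\psi'}$ finite (braid orbits of tuples in $G^{|\psi'|}$ with $G$ finite), the integer $K \eqdef \max_{\psi' \in \{0,\ldots,\kappa\}^{D^*}} \card{F_{\psi'}}$ is a finite uniform bound. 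I do not foresee a serious obstacle: the only point requiring care is that each factored tuple $(\tilde\gamma_i)^{m_i}$ affects only one coordinate of the multidiscriminant, which makes the successive reductions composable without interference.
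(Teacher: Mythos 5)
Your proof is correct, and it leans on the same key ingredient as the paper, namely \Cref{lem:factorization} together with the constant $\kappa = \max_{\gamma \in D^*} \card{\gamma}\ord(\gamma)$ and the bound $K$ taken over the finite cube $\{0,\ldots,\kappa\}^{D^*}$. The organization differs slightly: the paper argues by induction on $\max\psi$, peeling off one element from each class where $\psi$ is maximal and using \Cref{cor:factorization-with-multidisc} to get a surjection $F_{\psi-\mu_c(\gbar)} \twoheadrightarrow F_{\psi}$ (hence $\card{F_\psi}\leq\card{F_{\psi-\mu_c(\gbar)}}$), whereas you remove all the excess at once, class by class, and package the result as a single injection $F_\psi \hookrightarrow F_{\psi'}$ with $\psi'=\min(\psi,\kappa)$. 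Your injectivity argument is sound and worth noting: it needs no cancellation in the monoid, only that concatenation is well defined on braid orbits (\Cref{propitem:concatenation-compatible}), since $y$ is recovered as the product of the fixed element $(\tilde\gamma_1)^{m_1}\cdots(\tilde\gamma_r)^{m_r}$ with $\phi(y)$; and the hypothesis check $\psi(\gamma_i)\geq\card{\gamma_i}\ord(\gamma_i)+m_i$ works precisely because earlier steps leave the $\gamma_i$-coordinate untouched. The two arguments are essentially equivalent in content (your injection is the ``choice of preimage'' form of the paper's surjection); yours avoids the induction, the paper's avoids having to fix choices in the factorization.
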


\begin{proof}
	Define $\kappa$ as above, and let $K$ be the maximal value of $\card{F_{\psi}}$ over maps $\psi : D^* \to \{0, \ldots, \kappa\}$.
	The number $K$ is a well-defined integer because there are finitely many maps $D^* \to \{0, \ldots, \kappa\}$ and every such map is the $c$-multidiscriminant of finitely many components.

	We prove that $\card{F_{\psi}} \leq K$ for all maps $\psi : D^* \to \Zpos$ by induction on $\max \psi$.
	If $\max \psi \leq \kappa$, then this holds by definition of $K$.
	Now, consider a map $\psi: D^* \to \Zpos$ such that $\max \psi > \kappa$, and assume that $\card{F_{\psi'}} \leq K$ for every map $\psi' : D^* \to \Zpos$ with $\max \psi' < \max \psi$.
	Let $c_1, \ldots, c_r$ be the elements of $D^*$ at which $\psi$ takes its maximal value.
	For each $i \in \{1,\ldots,r\}$, let $g_i$ be an element of $c_i$.
	This defines an $r$-tuple $\gbar = (g_1, \ldots, g_r)$.
	Note that $\mu_c(\gbar)$ takes the value $1$ where $\psi$ is maximal, and takes the value $0$ otherwise.
	Therefore:
	\[
		\psi
		\;\geq\;
		\max(\psi)\mu_c(\gbar)
		\;\geq\;
		(1 + \kappa)\mu_c(\gbar)
		\;=\;
		\mu_c(\gbar)
		+
		\kappa \,
		\Nn\!\!\left(
			\mu_c(\gbar)
		\right).
	\]
	By \Cref{cor:factorization-with-multidisc}, we can factor the component represented by $\gbar$ from any component with group~$G$ and multidiscriminant $\psi$.
	Thus, concatenation with $\gbar$ induces a surjection~$F_{\psi - \mu_c(\gbar)} \twoheadrightarrow F_{\psi}$ and in particular
	$
		\card{F_{\psi}}
		\leq
		\card{F_{\psi - \mu_c(\gbar)}}
	$.
	The maximal value taken by the map $\psi - \mu_c(\gbar)$ is $\max\psi - 1$, so $\card{F_{\psi - \mu_c(\gbar)}} \leq K$ by the induction hypothesis.
\end{proof}

\begin{proposition}
	\label{prop:upperbound-exponent}
	We have $\card{\pi_0\CHur^*_X(G, D, n\xi)} = \Oo{n^{\Omega(D)}}$.
\end{proposition}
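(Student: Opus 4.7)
The plan is to combine the two main ingredients already in place: the bound on the number of components with a fixed $c$-multidiscriminant (\Cref{lem:bounded-comp-each-multidisc}) and the polynomial estimate for the number of likely maps (\Cref{prop:counting-likely-maps}). All the real work has been done; what remains is really a counting organization.

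First I would observe that, by the definition of $\pi_0\CHur^*_X(G, D, n\xi)$ together with \Cref{defn:likely-map}, the $c$-multidiscriminant of any element $x \in \pi_0\CHur^*_X(G, D, n\xi)$ is a likely map of degree $n$. Partitioning components by their multidiscriminant therefore gives
\[
	\pi_0\CHur^*_X(G, D, n\xi)
	=
	\bigsqcup_{\psi \in \mathcal{L}_n(G, D, \xi)}
		\Big\{
			x \in \pi_0\CHur^*_X(G, D, n\xi)
			\,\Big\vert\,
			\mu_c(x) = \psi
		\Big\}.
\]

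Next I would bound each fiber by $K$, the constant produced by \Cref{lem:bounded-comp-each-multidisc}. In the affine case $X = \AC$, the fiber above $\psi$ is literally a subset of $F_\psi$, so its cardinality is at most $K$. In the projective case $X = \PC$, we have the inclusion $\Comp_{\PC}(c) \subseteq \Comp_{\AC}(c)$ (cf.\ \Cref{defn:monoid-comp-pc}), so the fiber above $\psi$ remains a subset of $F_\psi$, again bounded by $K$. Thus in both cases
\[
	\card{\pi_0\CHur^*_X(G, D, n\xi)}
	\leq
	K \cdot \card{\mathcal{L}_n(G, D, \xi)}.
\]

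Finally, \Cref{prop:counting-likely-maps} gives $\card{\mathcal{L}_n(G, D, \xi)} = O(n^{\Omega(D)})$, from which the desired upper bound $\card{\pi_0\CHur^*_X(G, D, n\xi)} = O(n^{\Omega(D)})$ follows immediately. There is essentially no obstacle at this stage — the two substantive inputs (the uniform fiber bound via the factorization lemma, and the polynomial count of likely maps) already encode all the combinatorial and group-theoretic difficulty, and the assembly step is purely formal.
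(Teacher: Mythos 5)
Your proof is correct and follows essentially the same route as the paper: partition the components by their $c$-multidiscriminant (a likely map of degree $n$), bound each fiber by the uniform constant $K$ from \Cref{lem:bounded-comp-each-multidisc}, and conclude with \Cref{prop:counting-likely-maps}. The only cosmetic difference is that the paper first reduces the projective case to the affine one via $\card{\pi_0\CHur^*_{\PC}(G, D, n\xi)} \leq \card{\pi_0\CHur^*_{\AC}(G, D, n\xi)}$, whereas you treat both cases at once through the inclusion $\Comp_{\PC}(c) \subseteq \Comp_{\AC}(c)$; both are valid.
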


\begin{proof}
	Since $\card{\pi_0\CHur^*_{\PC}(G, D, n\xi)} \leq \card{\pi_0\CHur^*_{\AC}(G, D, n\xi)}$, it suffices to prove the result when $X = \AC$.
	We have:
	\begin{align*}
		\card{\pi_0\CHur^*_{\AC}(G, D, n\xi)}
		& = \sum_{\psi \in \mathcal{L}_n(G, D, \xi)} \card{F_{\psi}} \\
		& \leq \card{\mathcal{L}_n(G, D, \xi)} \times K & \text{ by \Cref{lem:bounded-comp-each-multidisc}} \\
		& = O\!\left( n^{\Omega(D)} \right) & \text{by \Cref{prop:counting-likely-maps}}.
		&\qedhere
	\end{align*}
\end{proof}

\section
  [Asymptotics of the count of components.
  Part~2: the leading coefficient]
  {Asymptotics of the count of components of $\CHur^*_X(G, D, n\xi)$.\break
Part~2: the leading coefficient}
	\label{sn:leading-coeff}

The setting for this section is as in \Cref{sn:exponent}: we fix $X$, $G$, $D$, $\xi$ and define $c$, $D^*$, $\tau$, $\Omega(D)$ in the same way.
We obtain additional information on the main asymptotics of the count of connected components of $\CHur^*_X(G, D, n\xi)$ as $n$ grows.
In \Cref{ssn:lifting}, we quickly review definitions and results (notably \Cref{thm:conway-parker}) concerning the lifting invariant.
In \Cref{subsn:most-mbig}, we prove \Cref{lem:few-small}, which implies that the bijection of \Cref{thm:conway-parker} concerns ``most'' components.
In \Cref{subsn:asymp-affine}, we prove the main theorem for $X = \AC$ (\Cref{thm:asymp-affine}, which is \Cref{thmitem:asymp-comp-main-1}).
The case $X = \PC$ is divided as such: \Cref{subsn:really-likely-maps-are-multidisc} contains general results, \Cref{subsn:leadin-coeff-non-splitter} addresses the case $\Omega(D_H)=0$ (\Cref{thmitem:asymp-comp-main-2}), and \Cref{subsn:leadin-coeff-xi-one} focuses on the case $\xi=1$ (\Cref{thmitem:asymp-comp-main-3}).

\subsection{The lifting invariant}
\label{ssn:lifting}

We review a lifting invariant introduced in \cite{EVW2}, based on the invariant from \cite{Fried95}.
A clear and concise presentation, including proofs for the facts stated here, is found in \cite{wood}.

Let $c \subseteq G$ be a union of conjugacy classes of $G$ which collectively generate $G$, and let $D^*$ be the set of all conjugacy classes of $G$ contained in $c$.
Let $U(G, c)$ be the Grothendieck group of $\Comp_{\AC}(c)$, i.e., the group generated by elements $[g]$ for each $g \in c$, satisfying $[g][h] = [h^g][g]$.
There is a monoid homomorphism:
\[
	\Pi_c :
	\left\lbrace
		\begin{matrix}
			\Comp_{\AC}(c) & \to & U(G, c) \\
			(g_1, \ldots, g_n) & \mapsto & [g_1]\cdots[g_n].
		\end{matrix}
	\right .
\]
The \emph{$c$-lifting invariant} of a component $x \in \Comp_{\AC}(c)$ is its image $\Pi_c(x) \in U(G, c)$.
There are group homomorphisms $\pi : U(G, c) \to G$ (induced by $[g] \mapsto g$) and $\mu_c : U(G, c) \to \Z^{D^*}$, recovering the product and $c$-multidiscriminant of a component $x \in \Comp_{\AC}(c)$ from its $c$-lifting invariant:
\[\begin{tikzcd}[ampersand replacement=\&]
	\& {\Comp_{\AC}(c)} \\
	\& {U(G, c)} \\
	G \&\& {\Z^{D^*}} \\
	\& {G^{\ab}}
	\arrow["{\Pi_c}", from=1-2, to=2-2]
	\arrow["\pi"', curve={height=18pt}, two heads, from=1-2, to=3-1]
	\arrow["{\mu_c}", curve={height=-18pt}, from=1-2, to=3-3]
	\arrow["\pi", curve={height=-6pt}, two heads, from=2-2, to=3-1]
	\arrow["{\mu_c}"', curve={height=6pt}, two heads, from=2-2, to=3-3]
	\arrow[two heads, from=3-1, to=4-2]
	\arrow["\tilde\pi", two heads, from=3-3, to=4-2]
\end{tikzcd}.\]
(Here, $\tilde\pi : \Z^{D^*} \to G^{\ab}$ is the group homomorphism defined in \Cref{defn:tilde-pi}.)

We let $U_1(G, c)$ be the kernel of the map $\pi : U(G, c) \to G$, which is a central subgroup of $U(G, c)$ (same proof as \Cref{propitem:product-one-commute}).
The group $U_1(G, c)$ is the Grothendieck group of $\Comp_{\PC}(c)$, and the lifting invariant defines a monoid homomorphism
$
	\Pi_c : \Comp_{\PC}(c) \to U_1(G, c)
$.

\subsubsection{Conway--Parker--Fried--Völklein--type theorems.}

The following theorem, which is \cite[Theorem~3.1]{wood}/\cite[Theorem~7.6.1]{EVW2}, is an improved version of a theorem of Conway and Parker which had already been improved by Fried and Völklein:

\begin{theorem}
	\label{thm:conway-parker}
	There is an integer $M_c$ such that for every map $\psi : D^* \to \Z$ satisfying $\min \psi \geq M_c$, the map $\Pi_c$ induces a bijection between the two following sets:
	\begin{itemize}
		\item
			the set of elements $x \in \Comp_{\AC}(c)$ such that $\mu_c(x) = \psi$ and $\gen{x} = G$;
		\item
			the set of elements $x \in U(G, c)$ such that $\mu_c(x) = \psi$.
	\end{itemize}
\end{theorem}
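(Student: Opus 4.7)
The plan is to prove both directions of the claimed bijection under the hypothesis $\min \psi \geq M_c$, where $M_c$ will be chosen large enough both to apply the factorization lemma and to leave room for a fixed generating tuple of $G$.

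For surjectivity, given $u \in U(G,c)$ with $\mu_c(u) = \psi$, I would first produce a \emph{positive} representative: writing $u = [g_1]^{\epsilon_1}\cdots [g_n]^{\epsilon_n}$ in the presentation of $U(G, c)$ and using the defining relations $[g][h]=[h^g][g]$ together with insertions of cancelling pairs $[k][k]^{-1}$ (allowed because each $\psi(\gamma) \geq M_c \geq 1$), one obtains a tuple $\gbar_0 \in c^{\sum_\gamma \psi(\gamma)}$ with $\Pi_c(\gbar_0) = u$ and $\mu_c(\gbar_0) = \psi$. The second step is to replace $\gbar_0$ by an equivalent representative (for $\Pi_c$) that generates $G$: fixing a generating tuple $\underline{h}$ of $G$ whose multidiscriminant is small compared to $\min\psi$, one inserts $\underline{h}$ into $\gbar_0$ and balances it with a compensating tuple $\underline{h}^{\dagger}$ chosen so that $\Pi_c(\underline{h} \cdot \underline{h}^{\dagger}) = 1$ and $\mu_c(\underline{h} \cdot \underline{h}^{\dagger}) = 0$; the existence of such a compensator within the slack of $\psi$ is one concrete requirement fixing the size of $M_c$.

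For injectivity, let $\gbar, \gbar'$ be two generating tuples with $\mu_c(\gbar) = \mu_c(\gbar') = \psi$ and $\Pi_c(\gbar) = \Pi_c(\gbar')$. Using the same generating tuple $\underline{h}$ and the factorization lemma \Cref{lem:factorization}, valid because $\min\psi \geq M_c$, I would write $\gbar \sim \underline{h} \cdot \tilde\gbar$ and $\gbar' \sim \underline{h} \cdot \tilde\gbar'$, reducing the problem to showing $\tilde\gbar \sim \tilde\gbar'$ under the conditions $\mu_c(\tilde\gbar) = \mu_c(\tilde\gbar')$ and $\Pi_c(\tilde\gbar) = \Pi_c(\tilde\gbar')$. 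The crucial feature of this reduction is that the $\underline{h}$ prefix guarantees that the overall monodromy group of the concatenated tuple remains $G$, so one can invoke \Cref{propitem:conjugate-subtuples} to conjugate any sub-block of trivial product by an arbitrary element of $G$. An induction on a complexity measure --- for instance, the first position at which $\tilde\gbar$ and $\tilde\gbar'$ disagree, after rearranging by braid moves --- then allows one to transform $\tilde\gbar$ into $\tilde\gbar'$ step by step, each step being a braid move made possible by the presence of spare entries in the appropriate conjugacy classes.

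The hard part will be injectivity, because the passage to the Grothendieck group $U(G, c)$ formally adjoins inverses and introduces cancellation relations $[g][g]^{-1} = 1$ that have no direct analogue among braid moves inside the monoid $\Comp_{\AC}(c)$. Every such cancellation appearing in an equality $\Pi_c(\gbar) = \Pi_c(\gbar')$ must ultimately be simulated by extracting a cancelling pair via braids (using the generation of $G$ by the tuple) and reabsorbing it elsewhere, and this can only be guaranteed when there are enough redundant entries in each class of $D^*$. Making this bookkeeping quantitative --- in terms of $\card{G}$, the sizes and orders of the classes in $D^*$, and the Schur-type obstruction $H_2(G, c)$ --- is what produces the explicit value of $M_c$.
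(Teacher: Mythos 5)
The paper does not prove this statement at all: it is imported verbatim from the literature (\autocite[Theorem 3.1]{wood}, \autocite[Theorem 7.6.1]{EVW2}), being the Conway--Parker/Fried--V\"olklein stabilization theorem. So what you are proposing is a from-scratch proof of a deep external result, and as written it has a genuine gap precisely at the point you flag as ``the hard part''. Your injectivity argument --- factor out a generating prefix $\underline{h}$ via \Cref{lem:factorization}, then induct ``on the first position at which $\tilde{\gbar}$ and $\tilde{\gbar}'$ disagree'', using \Cref{propitem:conjugate-subtuples} and spare entries --- never actually uses the hypothesis $\Pi_c(\gbar)=\Pi_c(\gbar')$ beyond the equalities of multidiscriminant and product that it already implies. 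If such an induction worked as described, it would prove that \emph{any} two generating tuples with the same sufficiently large multidiscriminant (and the same product) are braid-equivalent; that is false in general, since by the theorem together with \Cref{thm:desc-ugc} (cf.\ \Cref{prop:compon-per-multidisc-affine}) a big multidiscriminant is realized by exactly $\card{[G,G]}\cdot\card{H_2(G,c)}$ distinct components of group $G$, so the $H_2(G,c)$-part of the lifting invariant genuinely separates braid orbits with identical coarse invariants. A correct proof must explain how an identity in the Grothendieck group $U(G,c)$ --- an identity involving formal inverses and the Schur-type extension $S_c$ --- is converted into an explicit chain of braid moves on positive tuples, and that conversion is the entire content of the theorem; your sketch defers it to an unspecified ``complexity measure'' induction.

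There is also a concrete defect in the surjectivity step: you ask for a compensating tuple $\underline{h}^{\dagger}$ with $\Pi_c(\underline{h}\,\underline{h}^{\dagger})=1$ and $\mu_c(\underline{h}\,\underline{h}^{\dagger})=0$, but the multidiscriminant of a nonempty tuple is a nonzero, nonnegative vector, so no such tuple exists in $\Comp_{\AC}(c)$; cancellation with negative multiplicities is available only in $U(G,c)$, not in the monoid. The real task for surjectivity is to show that, for $\min\psi\geq M_c$, generating tuples with multidiscriminant $\psi$ realize \emph{every} element of the fiber $\mu_c^{-1}(\psi)\subseteq U(G,c)$, i.e.\ every coset of the $H_2(G,c)$-ambiguity; this again requires controlling the Schur-multiplier component of the lifting invariant and is not achieved by inserting a generating block. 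Given that the paper treats \Cref{thm:conway-parker} as a citation, the honest options are either to cite it as the paper does, or to reproduce one of the known proofs (EVW2, Wood), whose machinery goes well beyond the braid-move bookkeeping sketched here.
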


\Cref{thm:conway-parker} means that $\Pi_c$ is almost a bijection when the multidiscriminants are big enough and covers are connected (i.e., the monoid $\Comp_X(c)$ behaves ``asymptotically'' like a group).
We make this notion of ``bigness'' precise with the following definition:

\begin{definition}
	\label{defn:Mbig}
	Let $M \in \N$.
	A component $x \in \Comp_X(G)$ is \emph{$M$-big} if it is represented by a tuple~$\gbar$ such that every conjugacy class of $\gen{x}$ appearing in $\gbar$ is the conjugacy class of at least $M$ entries of~$\gbar$.
\end{definition}

\begin{remark}
	\label{rk:univ-M}
	Let $M$ be the maximal value of $M_c$ where $c$ is a union of conjugacy classes of~$G$ which collectively generate~$G$.
	Then, \Cref{thm:conway-parker} says that an $M$-big component $x$ with $\gen{x} = G$ is entirely determined by its $G$-lifting invariant.
\end{remark}

\subsubsection{Description of $U(G, c)$ and $U_1(G, c)$.}
\label{subsubsn:desc-ugc}

Fix a Schur extension $S \twoheadrightarrow G$, i.e., a central extension~$S$ of $G$ by $H_2(G,\Z)$ such that $H_2(G,\Z) \subseteq [S,S]$.
By definition, it fits in an exact sequence:
\[
	1
	\to H_2(G, \Z)
	\to S
	\overset{p}{\to} G
	\to 1.
\]
Let $Q_c$ be the subgroup of $H_2(G, \Z)$ generated by commutators $[\hat x, \hat y]$ between elements of $S$ whose projections in $G$ are commuting elements of $c$:
\[
	Q_c
	:=
	\sbox0{$
		[\hat x, \hat y]
		\,\,\vast| \,\,
		\begin{matrix}
			\hat x, \hat y \in S \\
			p(\hat x), p(\hat y) \in c \\
			p(\hat x)p(\hat y) = p(\hat y)p(\hat x)
		\end{matrix}
	$}
	\mathopen{\resizebox{1.2\width}{\ht0}{$\Bigg\langle$}}
	\usebox{0}
	\mathclose{\resizebox{1.2\width}{\ht0}{$\Bigg\rangle$}}
	.
\]

\begin{definition}
	\label{defn:H2Gc}
	We define the quotient groups $S_c := S/Q_c$ and $H_2(G, c) := H_2(G, \Z)/Q_c$.
\end{definition}

\begin{theorem}
	\label{thm:desc-ugc}
	Both $U(G, c)$ and $U_1(G, c)$ admit descriptions as (fiber) products:
	\begin{enumerate}[label=(\roman*)]
		\item
			\label{thm:desc-ugc-i}
			$
				U(G,c)
				\simeq
				S_c
				\underset{G^{\ab}}{\times}
				\Z^{D^*}
			$
		\item
			\label{thm:desc-ugc-ii}
			$
				U_1(G,c)
				\simeq
				H_2(G, c)
				\times
				\ker \tilde\pi
			$,
			where $\tilde\pi : \Z^{D^*} \to G^{\ab}$ is as in \Cref{defn:tilde-pi}.
	\end{enumerate}
\end{theorem}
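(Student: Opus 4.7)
The strategy is to prove (i) by exhibiting mutually inverse homomorphisms, and then derive (ii) by identifying $U_1(G,c)$ as the kernel of the natural projection to $G$ restricted to $U(G,c)$ under the isomorphism of (i).

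For (i), I would fix any set-theoretic lift $\sigma : c \to S$ of $p$ and define a candidate homomorphism $\Phi : U(G,c) \to S_c \underset{G^{\ab}}{\times} \Z^{D^*}$ on generators by $\Phi([g]) = (\bar{\sigma(g)}, e_{[g]})$, where $e_{[g]} \in \Z^{D^*}$ is the indicator of the conjugacy class of $g$. This pair lies in the fibered product because $\bar p(\bar{\sigma(g)}) = g$ has image $\tilde\pi(e_{[g]})$ in $G^{\ab}$. The heart of the argument is verifying that $\Phi$ respects the defining relation $[g][h] = [h^g][g]$ of $U(G,c)$, which amounts to showing that the element $\sigma(g)\sigma(h)\sigma(g)^{-1}\sigma(h^g)^{-1}$ lies in $Q_c$. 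Both factors are lifts of the same element $h^g \in G$, so this element automatically lies in $H_2(G,\Z) = \ker p$; the task is to show it lies in the subgroup $Q_c$. This is the main obstacle: while the case of commuting $g,h$ is essentially the definition of $Q_c$, the general case requires expressing the discrepancy as a product of commutators of lifts of commuting pairs in $c$, using the centrality of $H_2(G,\Z)$ in $S$ to manipulate lifts freely.

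To construct the inverse $\Psi$, I would use that $S_c$ is generated as a group by the classes $\bar{\sigma(g)}$ for $g \in c$, since $c$ generates $G$ and $H_2(G,\Z) \subseteq [S,S]$ is absorbed once we pass to the quotient (via the same commutator analysis as above). Then $\Psi$ is defined on such generators by $\bar{\sigma(g)} \mapsto [g]$, and extended to the fibered product by matching with the standard basis of $\Z^{D^*}$. One then verifies that $\Psi$ is well-defined modulo $Q_c$ (dual to the verification in Step~2) and that the fibered product condition is respected. Finally, $\Phi \circ \Psi$ and $\Psi \circ \Phi$ agree with the identity on generators, hence everywhere.

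For (ii), by definition $U_1(G,c) = \ker(\pi : U(G,c) \to G)$. Under the isomorphism of (i), the map $\pi$ corresponds to the projection $(s, \psi) \mapsto \bar p(s)$. An element $(s, \psi)$ of the fibered product lies in $\ker\pi$ iff $\bar p(s) = 1$, i.e.\ iff $s \in \ker(\bar p : S_c \to G) = H_2(G,c)$; the fibered product condition $\bar p(s)^{\ab} = \tilde\pi(\psi)$ then reduces to $\tilde\pi(\psi) = 1$. Since both constraints hold automatically once $s \in H_2(G,c)$ and $\psi \in \ker\tilde\pi$, the fibered product degenerates to the direct product $H_2(G,c) \times \ker\tilde\pi$, establishing (ii).
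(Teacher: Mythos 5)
Your deduction of (ii) from (i) is correct and is exactly the paper's argument: under the isomorphism of (i), the map to $G$ becomes $(s,\psi)\mapsto p_c(s)$ (with $p_c\colon S_c\to G$ the induced projection), and on its kernel the fibered-product condition degenerates, giving $\ker(S_c\to G)\times\ker\tilde\pi = H_2(G,c)\times\ker\tilde\pi$. Note, however, that the paper does not reprove (i) at all: it cites it as Theorem~2.5 of Wood's paper on the lifting invariant. You attempt to prove (i) from scratch, and that attempt has a genuine gap at precisely the step you yourself flag as ``the main obstacle.''

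Concretely: for an \emph{arbitrary} set-theoretic lift $\sigma\colon c\to S$, the element $\sigma(g)\sigma(h)\sigma(g)^{-1}\sigma(h^g)^{-1}$ need not lie in $Q_c$, and no amount of ``expressing the discrepancy as a product of commutators of lifts of commuting pairs'' can fix this, because the claim is simply false for a bad choice of $\sigma$. When $h^g\neq h$, the values $\sigma(h)$ and $\sigma(h^g)$ are independent arbitrary choices; replacing $\sigma(h^g)$ by $z\,\sigma(h^g)$ with $z\in H_2(G,\Z)$ central multiplies the discrepancy by $z^{-1}$, so unless $H_2(G,c)$ is trivial one can always choose $\sigma$ so that the relation $[g][h]=[h^g][g]$ is \emph{not} respected by your $\Phi$. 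Thus well-definedness of $\Phi$ requires constructing a system of lifts of the elements of $c$ in $S_c$ that is compatible with conjugation (modulo $Q_c$), and the existence of such a system is essentially the whole content of Wood's theorem --- it is not a routine verification, and your proposal does not supply it. The construction of the inverse $\Psi$ has the same problem in dual form: you define it on the generators $\sigma(g)$ of $S_c$, but $S_c$ is not given to you by a presentation, so ``well-defined modulo $Q_c$'' is not something you can check relation-by-relation without further input; one would instead argue via surjectivity plus a count of kernels, or invoke the universal property that Wood establishes. In short: (ii) is fine and matches the paper; for (i) you should either cite \autocite[Theorem 2.5]{wood} as the paper does, or carry out the nontrivial lifting argument (a canonical, conjugation-equivariant choice of lifts in $S_c$) that your sketch currently takes for granted.
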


\begin{proof}
	Point~\ref{thm:desc-ugc-i} is \cite[Theorem~2.5]{wood}.
	The second point follows:
	\[
		U_1(G,c) = \ker(U(G,c) \to G) \simeq \ker(S_c \to G) \times \ker(\Z^{D^*} \to G^{\ab}) = H_2(G,c) \times \ker \tilde\pi.
		\qedhere
	\]
\end{proof}

\subsection{Most components are $M$-big}
\label{subsn:most-mbig}

Let $M$ be a constant as in \Cref{rk:univ-M}, i.e., such that $M$-big components (\Cref{defn:Mbig}) of group~$G$ are determined by their $c$-lifting invariant.

\begin{lemma}
	\label{lem:few-small}
	For $n \in \N$, denote by $F^{M\text{-small}}_X(G, D, n\xi)$ the set of components $x \in \Comp_X(D, \xi)$ of group $G$ and degree $n$ such that $\min \mu_c(x) < M$.
	Then:
	\[
		\card{F^{M\text{-small}}_X(G, D, n\xi)} = \Oo{n^{\Omega(D)-1}}.
	\]
\end{lemma}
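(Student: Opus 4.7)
The plan is to reduce the counting of $M$-small components to the counting of their $c$-multidiscriminants, which live in the finite set $\mathcal{L}_n(G, D, \xi)$. By \Cref{lem:bounded-comp-each-multidisc}, there is a uniform constant $K$ such that each map $\psi \in \mathcal{L}_n(G, D, \xi)$ is the $c$-multidiscriminant of at most $K$ components of group $G$. Hence it suffices to prove that the number of likely maps $\psi \in \mathcal{L}_n(G, D, \xi)$ with $\min \psi < M$ is $O(n^{\Omega(D) - 1})$.

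I would bound this by a union bound over which class $\gamma_0 \in D^*$ witnesses the smallness. Fix $\gamma_0 \in D^*$ and set $\gamma = \tau(\gamma_0) \in D$. Observe that if $|\tau^{-1}(\gamma)| = 1$, then any likely map of degree $n$ satisfies $\psi(\gamma_0) = n\xi(\gamma)$, which exceeds $M$ once $n$ is large enough; such $\gamma_0$ therefore contribute nothing for large $n$ and may be discarded. Assume now that $|\tau^{-1}(\gamma)| \geq 2$, and count likely maps $\psi$ with $\psi(\gamma_0) < M$: there are at most $M$ choices for $\psi(\gamma_0)$; having fixed it, the constraint from \Cref{defn:likely-map} at $\gamma$ forces the remaining $|\tau^{-1}(\gamma)| - 1$ values to sum to $n\xi(\gamma) - \psi(\gamma_0)$, which gives
\[
	\binom{n\xi(\gamma) - \psi(\gamma_0) + |\tau^{-1}(\gamma)| - 2}{|\tau^{-1}(\gamma)| - 2} = O\!\left(n^{|\tau^{-1}(\gamma)| - 2}\right)
\]
possibilities; and on each other fiber $\tau^{-1}(\gamma')$ with $\gamma' \ne \gamma$, the distribution of $n\xi(\gamma')$ is free, contributing a factor $O(n^{|\tau^{-1}(\gamma')| - 1})$ exactly as in the proof of \Cref{prop:counting-likely-maps}.

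Multiplying these factors and summing over $\gamma' \in D \setminus \{\gamma\}$ yields an overall bound of $O(n^{\Omega(D) - 1})$ for each choice of $\gamma_0$, since one degree of freedom has been lost in the fiber over $\gamma$. Taking a final union bound over the finitely many choices of $\gamma_0 \in D^*$ preserves this estimate, which combined with the constant $K$ from \Cref{lem:bounded-comp-each-multidisc} gives $|F^{M\text{-small}}_X(G, D, n\xi)| = O(n^{\Omega(D) - 1})$. The only mild subtlety is handling singleton fibers of $\tau$ separately, as outlined above; once this is noted, the remainder is a direct counting computation analogous to \Cref{prop:counting-likely-maps}.
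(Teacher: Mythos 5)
Your proposal is correct and follows essentially the same route as the paper: both reduce the statement to counting likely maps $\psi \in \mathcal{L}_n(G, D, \xi)$ with $\min \psi < M$, using the uniform constant $K$ from \Cref{lem:bounded-comp-each-multidisc} to pass from multidiscriminants back to components. The only difference is in the elementary counting step --- the paper counts the complementary set of likely maps with $\min \psi \geq M$ by a shifted stars-and-bars computation and observes that it has the same degree and leading coefficient as $\card{\mathcal{L}_n(G, D, \xi)}$ in \Cref{prop:counting-likely-maps}, whereas you bound the small maps directly via a union bound over the witnessing class $\gamma_0 \in D^*$ (correctly discarding singleton fibers of $\tau$ for large $n$); both arguments are valid and of comparable length.
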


This should be compared with \Cref{prop:counting-likely-maps}: when $n$ grows, $100\%$ of components are $M$-big. 

\begin{proof}
	Let us count likely maps $\psi$ of degree $n$ such that $\min \psi \geq M$.
	To fix such a likely map, for each $\gamma \in D$, we first put aside $M$ occurrences for each of the $\card{\tau^{-1}(\gamma)}$ conjugacy class making up~$\gamma$.
	Then, for each $\gamma \in D$, we have to choose how to split the $n \xi(\gamma) - M\card{\tau^{-1}(\gamma)}$ remaining occurrences between the $\card{\tau^{-1}(\gamma)}$ conjugacy classes that $\gamma$ consists of.
	The number of ways to do so is:
	\[
		\prod_{\gamma \in D}
		\binom{
			\bigg( n \xi(\gamma) - M \card{\tau^{-1}(\gamma)} \bigg) + \card{\tau^{-1}(\gamma)} - 1
		}{
			\card{\tau^{-1}(\gamma)} - 1
		},
	\]
	which coincides, for $n$ large, with a polynomial having same leading monomial as~$\card{\mathcal{L}_n(G, D, \xi)}$ (cf.~\Cref{prop:counting-likely-maps}).
	Hence, the number of likely maps $\psi \in \mathcal{L}_n(G, D, \xi)$ satisfying $\min \psi < M$ is a $\Oo{n^{\Omega(D)-1}}$.
	Using \Cref{lem:bounded-comp-each-multidisc}, it follows that
	$
		\card{F^{M\text{-small}}_X(G, D, n\xi)} = \Oo{n^{\Omega(D)-1}}
	$.
\end{proof}

\subsection{The case where $X$ is the affine line}
\label{subsn:asymp-affine}

Denote by $[G, G]$ the commutator subgroup of $G$, so that $\card{G} = \card{G^{\ab}} \cdot \card{[G, G]}$.

\begin{proposition}
	\label{prop:compon-per-multidisc-affine}
	Let $\psi \in \Z^{D^*}$ be a likely map such that $\min \psi \geq M$.
	Then, there are exactly $\card{[G,G]} \cdot \card{H_2(G, c)}$ elements of $\Comp_{\AC}(D, \xi)$ of group $G$ whose $c$-multidiscriminant is $\psi$.
\end{proposition}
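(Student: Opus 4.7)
The plan is to use the Conway-Parker-Fried-Völklein-type theorem (\Cref{thm:conway-parker}) to reduce the counting problem to the computation of a fiber of $\mu_c : U(G, c) \to \Z^{D^*}$, and then to evaluate this fiber using the description of $U(G, c)$ provided by \Cref{thm:desc-ugc-i}.

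First, since $M$ is taken as in \Cref{rk:univ-M}, in particular $M \geq M_c$. The hypothesis $\min \psi \geq M$ therefore puts us in the range where \Cref{thm:conway-parker} applies: the lifting invariant $\Pi_c$ induces a bijection between the components $x \in \Comp_{\AC}(c)$ of group $G$ with $\mu_c(x) = \psi$ and the elements $u \in U(G, c)$ with $\mu_c(u) = \psi$. Since $\psi$ is likely for $(D, \xi)$, the first set is exactly the subset of $\Comp_{\AC}(D, \xi)$ we want to count. So it suffices to compute the size of the fiber $\mu_c^{-1}(\psi) \subseteq U(G, c)$.

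Next, I apply \Cref{thm:desc-ugc-i}: under the isomorphism $U(G,c) \simeq S_c \times_{G^{\ab}} \Z^{D^*}$, the morphism $\mu_c$ corresponds to projection onto the second factor. Hence the fiber $\mu_c^{-1}(\psi)$ is in bijection with the fiber of $S_c \to G^{\ab}$ above $\tilde\pi(\psi)$. Since $S \twoheadrightarrow G \twoheadrightarrow G^{\ab}$ is surjective, so is $S_c \to G^{\ab}$, and this fiber is a torsor under $N \eqdef \ker(S_c \to G^{\ab})$. It thus remains to compute $\card N$.

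Finally, let $K \eqdef \ker(S \to G^{\ab})$, which is the preimage in $S$ of $[G, G]$. Since $\ker(S \to G) = H_2(G, \Z)$ and this group sits inside $K$, we have $\card K = \card{H_2(G, \Z)} \cdot \card{[G, G]}$. Now $N = K/Q_c$ because $Q_c \subseteq H_2(G, \Z) \subseteq K$; hence
\[
	\card N
	=
	\frac{\card K}{\card{Q_c}}
	=
	\card{[G, G]} \cdot \frac{\card{H_2(G, \Z)}}{\card{Q_c}}
	=
	\card{[G, G]} \cdot \card{H_2(G, c)},
\]
by the definition of $H_2(G, c) = H_2(G, \Z)/Q_c$ (\Cref{defn:H2Gc}). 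Combining this with the bijection from \Cref{thm:conway-parker} yields exactly $\card{[G, G]} \cdot \card{H_2(G, c)}$ components, as required. The only mild subtlety to spell out is the surjectivity of $S_c \to G^{\ab}$ (so that the fiber is nonempty and is really a full torsor under $N$), which follows from the surjectivity of each of the maps $S \to G$ and $G \to G^{\ab}$ together with the fact that $Q_c$ acts trivially on the target $G^{\ab}$.
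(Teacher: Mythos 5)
Your proof is correct and follows essentially the same route as the paper: Conway--Parker (\Cref{thm:conway-parker}) reduces the count to a fiber of $\mu_c$ on $U(G,c)$, the fibered-product description of \Cref{thm:desc-ugc} identifies that fiber with a fiber of $S_c \to G^{\ab}$, and the kernel of this surjection has cardinality $\card{[G,G]}\cdot\card{H_2(G,c)}$. Your computation of that kernel upstairs in $S$ (via $K/Q_c$) rather than directly from the exact sequence $1 \to H_2(G,c) \to S_c \to G \to 1$ is only a cosmetic variation.
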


\begin{proof}
	By \Cref{thm:conway-parker}, components of group $G$ and multidiscriminant $\psi$ are in bijection with elements of $U(G, c)$ whose image in $\Z^{D^*}$ is $\psi$.
	By \iref{thm:desc-ugc}{thm:desc-ugc-i}, we have an isomorphism $U(G, c) \simeq S_c \underset{G^{\ab}}{\times} \Z^{D^*}$.
	Therefore, elements of $U(G, c)$ whose image in $\Z^{D^*}$ is $\psi$ are in bijection with elements of $S_c$ whose image in $G^{\ab}$ is $\tilde\pi(\psi)$.
	Recall that $\ker(S_c \to G) = H_2(G,c)$, so the kernel of the surjection $S_c \to G^{\ab}$ has cardinality $\card{[G, G]} \cdot \card{H_2(G, c)}$, and thus $\tilde\pi(\psi)$ is the image of exactly $\card{[G, G]} \cdot \card{H_2(G, c)}$ elements of $S_c$.
	We conclude that the number of components of group $G$ and multidiscriminant $\psi$ is
	$
		\card{[G, G]} \cdot \card{H_2(G, c)}
	$.
\end{proof}


\begin{theorem}
	\label{thm:asymp-affine}
	We have the following asymptotic estimates:
	\[
		\card{\pi_0\CHur^*_{\AC}(G, D, n\xi)}
		\underset{n\to\infty}{=}
		\left(
			\card{[G, G]}
			\cdot
			\card{H_2(G, c)}
			\cdot
			\prod_{\gamma \in D}
				\frac{
					\xi(\gamma)^{
						\card{
							\tau^{-1}(\gamma)
						} - 1
					}
				}{
					\left(
						\card{
							\tau^{-1}(\gamma)
						} - 1
					\right)!
				}
		\right)
		n^{\Omega(D)}
		+
		\Oo{n^{\Omega(D)-1}}.
	\]
\end{theorem}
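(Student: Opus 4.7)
The plan is to decompose the count $\card{\pi_0\CHur^*_{\AC}(G, D, n\xi)}$ according to the $c$-multidiscriminant of each component, and then isolate the contribution coming from likely maps $\psi \in \mathcal{L}_n(G, D, \xi)$ which are ``large enough'' for \Cref{thm:conway-parker} to apply uniformly. Concretely, I write
\[
    \card{\pi_0\CHur^*_{\AC}(G, D, n\xi)}
    = \sum_{\psi \in \mathcal{L}_n(G, D, \xi)} \card{F_\psi}
    = \sum_{\substack{\psi \in \mathcal{L}_n(G, D, \xi) \\ \min\psi \geq M}} \card{F_\psi}
    + \card{F^{M\text{-small}}_{\AC}(G, D, n\xi)},
\]
where $M$ is the universal constant of \Cref{rk:univ-M} and $F_\psi$ denotes the set of components of group $G$ and multidiscriminant $\psi$, as in \Cref{subsn:upperbound-exponent}.

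The error term is handled first: \Cref{lem:few-small} gives $\card{F^{M\text{-small}}_{\AC}(G, D, n\xi)} = O(n^{\Omega(D)-1})$, which is absorbed into the claimed remainder. For the main term, \Cref{prop:compon-per-multidisc-affine} says that whenever $\psi$ is a likely map with $\min\psi \geq M$, we have the exact identity $\card{F_\psi} = \card{[G,G]}\cdot\card{H_2(G,c)}$, independent of $\psi$. Therefore
\[
    \sum_{\substack{\psi \in \mathcal{L}_n(G, D, \xi) \\ \min\psi \geq M}} \card{F_\psi}
    = \card{[G,G]}\cdot\card{H_2(G,c)} \cdot \card{\{\psi \in \mathcal{L}_n(G, D, \xi) : \min\psi \geq M\}}.
\]

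Finally, I need to evaluate the number of likely maps with $\min\psi \geq M$. This is exactly the combinatorial count already carried out inside the proof of \Cref{lem:few-small}: after subtracting off $M$ ``reserved'' occurrences in each conjugacy class $\gamma' \in \tau^{-1}(\gamma)$, one is left with a product of binomial coefficients whose leading-order polynomial in $n$ agrees, term for term, with the one computed for $\card{\mathcal{L}_n(G, D, \xi)}$ in \Cref{prop:counting-likely-maps}. In particular the difference
\[
    \card{\mathcal{L}_n(G, D, \xi)} - \card{\{\psi \in \mathcal{L}_n(G, D, \xi) : \min\psi \geq M\}}
\]
is $O(n^{\Omega(D)-1})$, so the leading coefficient of the main term is $\card{[G,G]}\cdot\card{H_2(G,c)}$ multiplied by $\prod_{\gamma \in D} \xi(\gamma)^{\card{\tau^{-1}(\gamma)}-1} / (\card{\tau^{-1}(\gamma)}-1)!$, and the remainder $O(n^{\Omega(D)-1})$ is preserved. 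Combining the three contributions yields the stated asymptotic. There is no substantive obstacle: the only mild care needed is to verify that the ``reserved-occurrence'' substitution used to count likely maps with $\min\psi \geq M$ really produces a polynomial of the same degree and leading coefficient as in \Cref{prop:counting-likely-maps}, which is a direct binomial computation already implicit in \Cref{lem:few-small}.
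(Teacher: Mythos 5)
Your proposal is correct and follows essentially the same route as the paper: decompose by $c$-multidiscriminant, bound the contribution of components with $\min\mu_c < M$ via \Cref{lem:few-small}, apply \Cref{prop:compon-per-multidisc-affine} to get the constant $\card{[G,G]}\cdot\card{H_2(G,c)}$ per large likely map, and conclude with the count of likely maps from \Cref{prop:counting-likely-maps}. The only cosmetic difference is that you spell out explicitly that the number of likely maps with $\min\psi \geq M$ agrees with $\card{\mathcal{L}_n(G,D,\xi)}$ up to $O(n^{\Omega(D)-1})$, a step the paper leaves implicit in its citation of \Cref{lem:few-small}.
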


\begin{proof}
	We have:
	\begin{align*}
		\card{\pi_0\CHur^*_{\AC}(G, D, n\xi)}
		& =
		\sum_{\psi \in \mathcal{L}_n(G, D, \xi)}
		\card{F_{\psi}} \\
		& =
		\left(
			\sum_{\min(\psi) > M}
				\card{F_{\psi}}
		\right)
		+
		F^{M\text{-small}}_{\AC}(G, D, n\xi).
	\end{align*}
	Apply \Cref{prop:compon-per-multidisc-affine} and \Cref{lem:few-small} to obtain:
	\[
		\card{\pi_0\CHur^*_{\AC}(G, D, n\xi)}
		=
		\card{[G, G]}
		\cdot
		\card{H_2(G, c)}
		\cdot
		\card{\mathcal{L}_n(G, D, \xi)}
		+
		\Oo{n^{\Omega(D)-1}}.
	\]
	We conclude using \Cref{prop:counting-likely-maps}.
\end{proof}

\subsection{Really-likely maps}
\label{subsn:really-likely-maps-are-multidisc}

For the remainder of this section, we focus exclusively on the case $X = \PC$.
First, we observe an obstruction for a given likely map to be the $c$-multidiscriminant of a component of $\CHur^*_{\PC}(G, D, n\xi)$.
Recall from \Cref{defn:tilde-pi} that there is a group homomorphism $\tilde\pi : \Z^{D^*} \to G^{\ab}$ such that $\tilde\pi(\mu_c(x))$ is the image in $G^{\ab}$ of $\pi(x)$ for all $x \in \Comp_{\AC}(D, \xi)$.
Hence, if $x \in \Comp_{\PC}(D, \xi) = \ker \pi$, it is necessary that $\mu_c(x) \in \ker \tilde\pi$.
This motivates the following definition:
\begin{definition}
	\label{defn:really-likely-map}
	A \emph{really-likely map} is a likely map $\psi \in \mathcal{L}(G, D, \xi)$ such that $\tilde \pi(\psi)=1$.
\end{definition}

Fix $M$ as in \Cref{thm:conway-parker}.
We show that the obstruction above is the only obstruction when $\min\psi \geq M$, and we determine the exact number of components whose $c$-multidiscriminant is~$\psi$:

\begin{proposition}
	\label{prop:counting-components-really-likely}
	Let $\psi \in \Z^{D^*}$ be a really-likely map satisfying $\min\psi \geq M$.
	Then, $\psi$ is the $c$-multidiscriminant of exactly $\card{H_2(G, c)}$ elements of $\Comp_{\PC}(D, \xi)$ of group $G$.
\end{proposition}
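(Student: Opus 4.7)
The plan is to mimic the proof of \Cref{prop:compon-per-multidisc-affine}, but replacing the affine bijection of \Cref{thm:conway-parker} with a version restricted to the submonoid $\Comp_{\PC}(c) \subseteq \Comp_{\AC}(c)$, and the description of $U(G,c)$ in \iref{thm:desc-ugc}{thm:desc-ugc-i} with the description of $U_1(G,c)$ in \iref{thm:desc-ugc}{thm:desc-ugc-ii}. The really-likeliness hypothesis $\tilde\pi(\psi)=1$ is exactly what ensures that the count is nonzero and corresponds to restricting to the kernel of $\tilde\pi$ inside $\Z^{D^*}$.

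More precisely, the first step is to observe that \Cref{thm:conway-parker} restricts to a bijection between
\[
	\{ x \in \Comp_{\PC}(c) \mid \mu_c(x) = \psi, \ \gen{x} = G \}
	\quad\longleftrightarrow\quad
	\{ x \in U_1(G, c) \mid \mu_c(x) = \psi \}.
\]
Indeed, for a component $x \in \Comp_{\AC}(c)$, one has $x \in \Comp_{\PC}(c)$ iff $\pi(x) = 1$, and via the commuting diagram at the start of \Cref{ssn:lifting}, $\pi(x) = 1$ in $G$ iff $\Pi_c(x) \in \ker(\pi : U(G,c) \to G) = U_1(G,c)$. Since $\min\psi \geq M$, \Cref{thm:conway-parker} applies, and the restriction is immediate.

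Next, I will invoke \iref{thm:desc-ugc}{thm:desc-ugc-ii} to identify $U_1(G,c) \simeq H_2(G,c) \times \ker\tilde\pi$. Under this isomorphism, the morphism $\mu_c : U_1(G,c) \to \Z^{D^*}$ corresponds to the projection to the second factor $\ker\tilde\pi \hookrightarrow \Z^{D^*}$ (this follows by tracing through the construction in \cite{wood}, but is already encoded in the commutative diagram of \Cref{ssn:lifting}). Therefore, the fiber of $\mu_c$ above any element $\psi \in \ker\tilde\pi$ has cardinality exactly $\card{H_2(G,c)}$, and is empty if $\psi \notin \ker\tilde\pi$.

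Finally, the hypothesis that $\psi$ is a \emph{really-likely} map (\Cref{defn:really-likely-map}) is precisely the statement that $\tilde\pi(\psi) = 1$, i.e. $\psi \in \ker\tilde\pi$. Combining the bijection above with the fiber count yields exactly $\card{H_2(G,c)}$ elements of $\Comp_{\PC}(c)$ with multidiscriminant $\psi$ and group $G$, which are exactly the elements of $\Comp_{\PC}(D,\xi)$ of group $G$ with multidiscriminant $\psi$ (since really-likeliness of $\psi$ of the appropriate degree places them in $\Comp_{\PC}(D,\xi)$). There is no real obstacle here; the only point to handle with care is checking that the morphism $\mu_c : U_1(G,c) \to \Z^{D^*}$ really corresponds to the projection onto $\ker\tilde\pi$ in the product decomposition, which is a direct consequence of the construction of that decomposition in the proof of \Cref{thm:desc-ugc}.
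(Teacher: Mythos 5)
Your proposal is correct and follows essentially the same route as the paper: restrict the Conway--Parker bijection of \Cref{thm:conway-parker} to product-one components (i.e.\ to $U_1(G,c)$ via the commuting diagram of \Cref{ssn:lifting}), then apply the decomposition $U_1(G,c)\simeq H_2(G,c)\times\ker\tilde\pi$ of \iref{thm:desc-ugc}{thm:desc-ugc-ii} and use really-likeliness to identify the fiber of $\mu_c$ over $\psi$ with $H_2(G,c)$. The paper's proof is just a terser version of this, leaving the restriction step and the identification of $\mu_c$ with the projection onto $\ker\tilde\pi$ implicit.
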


\begin{proof}
	By \Cref{thm:conway-parker}, elements of $\Comp_{\PC}(D, \xi)$ of group $G$ with multidiscriminant $\psi$ are in bijection with elements of $U_1(G, c)$ whose image in $\Z^{D^*}$ is $\psi$.
	By \iref{thm:desc-ugc}{thm:desc-ugc-ii}, we have an isomorphism $U_1(G, c) \simeq H_2(G, c) \times \ker \tilde\pi$.
	Here, $\ker \tilde\pi$ is the subgroup of $\Z^{D^*}$ consisting of elements whose image in $G^{\ab}$ is $1$, which is the case of really-likely maps.
	So, elements of $U_1(G, c)$ whose image in $\Z^{D^*}$ is $\psi$ are in bijection with elements of $H_2(G, c)$.
	This concludes the proof.
\end{proof}

Thus, the main remaining task is to count really-likely maps of degree $n$.

\subsection{Situation $1$: The non-splitting case}
\label{subsn:leadin-coeff-non-splitter}

We first focus on the case $\Omega(D) = 0$.
In this case, $D$ is a set of conjugacy classes of $G$, i.e., $D = D^*$, and $\tau$ is the identity.
For each $n$, there is exactly one likely map of degree~$n$, namely~$n\xi$.
Whether it is a really-likely map only depends on the order $k := \ord \, \tilde\pi(\xi)$ of the element~$\tilde \pi(\xi)$ in~$G^{\ab}$, which can be computed explicitly in concrete situations.
For every $n$, either~$n$ is not a multiple of~$k$ and there is no component of degree $n$, or $n$ is a multiple of~$k$ and there is exactly one really-likely map of degree $n$.
When $n \geq M$, this map only takes values $\geq M$, so \Cref{prop:counting-components-really-likely} gives the exact count of components of $\CHur^*_{\PC}(G, D, n\xi)$ for large enough $n$:

\begin{proposition}
	\label{prop:hilb-func-non-splitt}
	For $n \in \N$, we have:
	\[
		\card{\pi_0\CHur^*_{\PC}(G, D, n\xi)}
		=
		\left \lbrace
		\begin{matrix}
			0 & \text{if } n \not\in k\N \\
			\card{H_2(G, c)} & \text{otherwise, provided } n \geq M
		\end{matrix}
		\right . .
	\]
\end{proposition}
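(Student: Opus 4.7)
The plan is to exploit the fact that when $\Omega(D)=0$ there is essentially nothing left to count: $D=D^*$ and $\tau$ is the identity, so by Definition~\ref{defn:likely-map} the unique likely map of degree $n$ is the map $n\xi$ itself. Hence the counting problem reduces to (a) deciding when $n\xi$ is really-likely, and (b) quoting Proposition~\ref{prop:counting-components-really-likely} to count the components with a given multidiscriminant. The main ingredients are already in place; the proof is essentially a verification of the hypotheses of Proposition~\ref{prop:counting-components-really-likely}.

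First I would handle the vanishing case $n \notin k\N$. The obstruction observed just before Definition~\ref{defn:really-likely-map} says that any $x \in \Comp_{\PC}(D, n\xi)$ must satisfy $\mu_c(x) \in \ker \tilde\pi$. Since $\Omega(D)=0$, such a component necessarily has $\mu_c(x) = n\xi$, so we would need $\tilde\pi(n\xi) = \tilde\pi(\xi)^n = 1$ in $G^{\ab}$; by definition of $k = \ord \tilde\pi(\xi)$, this forces $k \mid n$. When $n \notin k\N$ no such component can exist, so $\card{\pi_0\CHur^*_{\PC}(G, D, n\xi)} = 0$.

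Next I would treat the case $k \mid n$ and $n \geq M$. Since $k \mid n$, we have $\tilde\pi(n\xi) = \tilde\pi(\xi)^n = 1$, so $n\xi$ is a really-likely map in the sense of Definition~\ref{defn:really-likely-map}. Moreover, because $\xi$ takes values in $\Zsp$ we have $\min \xi \geq 1$, and therefore $\min(n\xi) \geq n \geq M$, which is exactly the hypothesis needed to invoke Proposition~\ref{prop:counting-components-really-likely}. That proposition then gives exactly $\card{H_2(G,c)}$ elements of $\Comp_{\PC}(D,\xi)$ of group $G$ with multidiscriminant $n\xi$, and since all components in $\pi_0\CHur^*_{\PC}(G, D, n\xi)$ must have this multidiscriminant (as it is the unique likely map of degree $n$), this gives the desired count.

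There is no serious obstacle here: the content of the statement is already packaged in Proposition~\ref{prop:counting-components-really-likely} together with the structural observation that $\Omega(D)=0$ collapses the set of likely maps of degree $n$ to a single element. The only mild point worth spelling out in the write-up is why the threshold $n \geq M$ in the statement is enough to ensure $\min(n\xi) \geq M$, which follows immediately from $\min \xi \geq 1$.
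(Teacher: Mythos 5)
Your proposal is correct and follows essentially the same route as the paper: since $\Omega(D)=0$, the map $n\xi$ is the unique likely map of degree $n$, the condition $\tilde\pi(n\xi)=1$ (equivalently $k \mid n$) is the only obstruction, and for $k \mid n$ with $n \geq M$ one has $\min(n\xi) \geq M$, so \Cref{prop:counting-components-really-likely} yields exactly $\card{H_2(G,c)}$ components. Nothing is missing.
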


In particular, an average order of $\card{\pi_0\CHur^*_{\PC}(G, D, n\xi)}$ is given by the constant $\frac{1}{k} \card{H_2(G, c)}$.

\subsection{Situation $2$: When $\card{\xi}=1$}
\label{subsn:leadin-coeff-xi-one}

We now discuss the case where $D=\{c\}$ is a singleton and $\xi(c) = 1$, i.e., we are counting elements of $\Comp_{\PC}(c)$ of group $G$ and degree $n$.
Let $c_1, \ldots, c_s$ be the elements of $D^*$, with $s = \Omega(D) + 1$.
In this situation, multidiscriminants and likely maps are seen as (special) $s$-tuples of elements of $\Z$.
Let $\tilde c_1, \ldots, \tilde c_s$ be the projections in $G^{\ab}$ of the elements of $c_1, \ldots, c_s$ respectively.
If~$\psi \in \Z^s$, define:
\[
	\tilde \pi(\psi) := \prod_{i=1}^s {\tilde c_i}\,\!^{\psi(i)}
	\in G^\ab.
\]

\begin{lemma}
	\label{lem:tildepi-is-surjective}
	The group homomorphism $\tilde \pi : \Z^s \rightarrow G^{\ab}$ is surjective.
\end{lemma}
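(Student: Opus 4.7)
The plan is to unpack definitions and use only the fact that $c$ generates $G$. The morphism $\tilde\pi$ sends a tuple $(n_1,\ldots,n_s) \in \Z^s$ to $\prod_{i=1}^s \tilde c_i^{\,n_i} \in G^{\ab}$, so its image is precisely the subgroup of $G^{\ab}$ generated by $\tilde c_1,\ldots,\tilde c_s$. The goal reduces to showing that these elements generate $G^{\ab}$.

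First I would invoke the standing assumption of \Cref{sn:exponent} (repeated at the top of \Cref{sn:leading-coeff}): the subsets in $D$ collectively generate $G$. In the present subsection $D = \{c\}$, so $c$ itself generates $G$. Consequently the images in $G^{\ab}$ of the elements of $c$ generate $G^{\ab}$.

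Next I would observe that any two conjugate elements of $G$ have the same image in $G^{\ab}$, because $G^{\ab}$ is abelian and conjugation is an inner automorphism that becomes trivial after abelianization. Therefore, for each $i \in \{1,\ldots,s\}$, every element of the conjugacy class $c_i$ has image $\tilde c_i$. Since $c = c_1 \sqcup \cdots \sqcup c_s$, the set $\{\tilde c_1,\ldots,\tilde c_s\}$ is exactly the image of $c$ in $G^{\ab}$, and by the previous paragraph it generates $G^{\ab}$. This gives surjectivity of $\tilde\pi$.

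There is no real obstacle here; the statement is a direct consequence of the generation hypothesis on $D$ together with the definition of $G^{\ab}$. The only thing to be mindful of is to record explicitly that conjugate elements descend to the same element of $G^{\ab}$, so that the well-definedness of $\tilde c_i$ (from \Cref{defn:tilde-pi}) is exploited correctly.
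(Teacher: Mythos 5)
Your proof is correct and follows essentially the same route as the paper: the paper lifts an arbitrary element of $G^{\ab}$, factors the lift as a product of elements of $c$ (using that $c$ generates $G$), and reads off a suitable $\psi$, which is just the explicit, element-by-element version of your observation that the $\tilde c_i$ generate $G^{\ab}$ because conjugation dies in the abelianization. No gap; nothing further needed.
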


\begin{proof}
	Consider an element $\tilde g \in G^{\ab}$ and fix an arbitrary lift $g \in G$ of $\tilde g$.
	Since $c$ generates $G$, $g$ factors as a product $g_1 \cdots g_r$ of elements of $c$.
	Let $\psi \in \Z^s$ be the map that counts for each $i=1,\ldots,s$ the number of elements of $c_i$ in this factorization.
	In $G^{\ab}$, we have $\tilde g = \tilde \pi(\psi)$.
\end{proof}

\begin{proposition}
	\label{prop:counting-really-likely-maps}
	As $n \to \infty$, the number of really-likely maps of degree $\leq n$ is
	\[
		\frac
		{n^s}
		{\card{G^{\ab}}s!}
		+
		O\!\left(
			n^{s-1}
		\right)
		.
	\]
\end{proposition}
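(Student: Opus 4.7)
The plan is to convert the count of really-likely maps into a character sum over the finite abelian group $G^{\ab}$, using the orthogonality relations, and then estimate each character sum via its rational generating function.

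In the present setting, a likely map of degree $m$ is exactly an $s$-tuple $\psi \in \Zpos^s$ with $\sum_{i=1}^s \psi_i = m$, and is really-likely iff $\tilde\pi(\psi) = 1$ in $G^{\ab}$. Using the orthogonality relation
\[
	\mathbf{1}_{\tilde\pi(\psi) = 1}
	=
	\frac{1}{\card{G^{\ab}}}
	\sum_{\chi \in \hat{G^{\ab}}}
	\chi(\tilde\pi(\psi))
	=
	\frac{1}{\card{G^{\ab}}}
	\sum_{\chi}
	\prod_{i=1}^s \chi(\tilde c_i)^{\psi_i},
\]
I would write the number $N(n)$ of really-likely maps of degree $\leq n$ as $\frac{1}{\card{G^{\ab}}} \sum_\chi S_n(\chi)$, where
\[
	S_n(\chi)
	=
	\sum_{\substack{\psi \in \Zpos^s \\ \sum_i \psi_i \leq n}}
	\prod_{i=1}^s \zeta_{i,\chi}^{\psi_i},
	\qquad
	\zeta_{i,\chi} = \chi(\tilde c_i) \in \C^\times.
\]

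The trivial character gives $S_n(1) = \binom{n+s}{s}$, contributing the main term $\frac{n^s}{s!} + \Oo{n^{s-1}}$. For a nontrivial $\chi$, surjectivity of $\tilde\pi$ (\Cref{lem:tildepi-is-surjective}) forces $\chi \circ \tilde\pi$ to be a nontrivial character of $\Z^s$, hence $\zeta_{i,\chi} \neq 1$ for at least one index $i$. I would then analyze the generating function
\[
	\sum_{n \geq 0} S_n(\chi) \, z^n
	=
	\frac{1}{1-z}
	\prod_{i=1}^s
	\frac{1}{1 - \zeta_{i,\chi} z},
\]
which is a rational function whose poles on the unit circle have total order at most $s$ at $z = 1$ (since at least one factor $(1-\zeta_{i,\chi} z)^{-1}$ contributes a pole strictly away from $z=1$). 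A partial-fraction decomposition then yields $S_n(\chi) = \Oo{n^{s-1}}$. Summing over the $\card{G^{\ab}}$ characters gives the claimed asymptotic.

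The main obstacle is the estimate $S_n(\chi) = \Oo{n^{s-1}}$ for $\chi \neq 1$; it hinges on showing that the pole of the generating function at $z = 1$ has order at most $s$, and this is precisely where the surjectivity of $\tilde\pi$ (\Cref{lem:tildepi-is-surjective}) is used. Everything else is a routine polynomial-in-$n$ computation, e.g.\ via the explicit formula $\binom{n+s}{s} = \frac{n^s}{s!} + \Oo{n^{s-1}}$.
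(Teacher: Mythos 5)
Your proof is correct, and it takes a genuinely different route from the paper's. The paper argues by congruences: it sets $\Delta=\exp(G^{\ab})$, notes that $\tilde\pi(\psi)$ depends only on $\psi$ modulo $\Delta$, uses \Cref{lem:tildepi-is-surjective} to see that the induced surjection $(\Z/\Delta\Z)^s\to G^{\ab}$ has kernel of size $\Delta^s/\card{G^{\ab}}$, and then counts, for each residue class in this kernel, the lattice points of the simplex $\{\psi\in\Zpos^s : \sum_i\psi_i\le n\}$ lying in that class via an explicit binomial coefficient asymptotic to $\frac{1}{s!}(n/\Delta)^s$; summing over the kernel gives the statement. You instead detect the condition $\tilde\pi(\psi)=1$ by character orthogonality on $G^{\ab}$: the trivial character yields the main term $\frac{1}{\card{G^{\ab}}}\binom{n+s}{s}=\frac{n^s}{\card{G^{\ab}}s!}+\Oo{n^{s-1}}$, and for $\chi\neq 1$ you use \Cref{lem:tildepi-is-surjective} (so that $\chi\circ\tilde\pi$ is nontrivial, i.e.\ some $\zeta_{i,\chi}\neq 1$) to bound the pole of $\frac{1}{1-z}\prod_i(1-\zeta_{i,\chi}z)^{-1}$ at $z=1$ by order $s$; since the poles at the other unit-circle points trivially have order at most $s$ as well (the factor $(1-z)^{-1}$ only contributes at $z=1$ --- worth stating explicitly, since the partial-fraction bound needs all poles controlled), you get $S_n(\chi)=\Oo{n^{s-1}}$. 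The two arguments are essentially Fourier-dual: the paper's is more elementary and exhibits the count class by class, while yours packages the error term more cleanly (all of it sits in the nontrivial character sums) and generalizes readily to weighted or twisted counts of likely maps.
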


\begin{proof}
	Let $\Delta = \exp(G^{\ab})$.
	For any $\psi \in \Z^s$, the value of $\tilde \pi (\psi)$ depends only on the class of $\psi$ in~$(\Z/\Delta\Z)^s$.
	By \Cref{lem:tildepi-is-surjective}, $\tilde \pi$ is surjective, so the induced group homomorphism $\tilde \pi : (\Z/\Delta\Z)^s \rightarrow G^{\ab}$ is surjective too and its kernel has size
	$
		\frac{\Delta^s}{\card{G^{\ab}}}
	$.

	Let $X_n$ be the subset of $(\Zpos)^s$ defined by the condition $x_1 + x_2 + \cdots + x_s \leq n$, i.e., the set of likely maps of degree $\leq n$.
	Consider some $\underline{a}=(a_1,\ldots,a_s) \in (\Z/\Delta\Z)^s$.
	For each $i = 1,\ldots,s$, denote by~$b_i$ the element of $\{0,\ldots,\Delta-1\}$ whose class modulo $\Delta$ is $a_i$.
	Let $S(\underline{a}) = \sum_{i=1}^s b_i$.
	If $k_1, \ldots, k_s$ are nonnegative integers and $x_i = b_i + k_i \Delta$, then we have:
	\[
		\sum_{i=1}^s
			x_i
		=
		\sum_{i=1}^s
			\Big(
				b_i + k_i \Delta
			\Big)
		=
		\left (
			\sum_{i=1}^s b_i
		\right )
		+
		\left (
			\sum_{i=1}^s k_i
		\right )
		\Delta
		=
		S(\underline{a})
		+
		\left (
			\sum_{i=1}^s k_i
		\right )
		\Delta.
	\]
	Thus, there are as many elements of $X_n$ whose projection in $(\Z/\Delta\Z)^s$ is $\underline{a}$ as there are tuples $(k_1,\ldots,k_s) \in (\Zpos)^s$ whose sum is at most $\left\lfloor \frac{n -S(\underline{a}) }{\Delta} \right\rfloor$, i.e.:
	\[
		\binom
			{
				s+
				\left\lfloor
					\frac
						{n -S(\underline{a}) }
						{\Delta}
				\right\rfloor
			}
			{s}.
	\]
	For large enough $n$, this expression coincides with a polynomial in $n$ of leading term $ \displaystyle \frac{1}{s!} \left ( \frac{n}{\Delta} \right )^{s}$.
	Putting everything together, the number of really-likely maps of degree $\leq n$, i.e., the number of elements $\psi \in X_n$ such that $\tilde \pi(\psi)=1$, is asymptotically equal, as $n \to \infty$, to:
	\[
		\frac
			{{\Delta}^s}
			{\card{G^{\ab}}}
		\times
		\left(
			\frac{1}{s!}
			\left (
				\frac{n}{\Delta}
			\right )^s
			+
			O\!\left(
				n^{s-1}
			\right)
		\right)
		=
		\frac
		{n^s}
		{\card{G^{\ab}}s!}
		+
		O\!\left(
			n^{s-1}
		\right).
		\qedhere
	\]
\end{proof}

\begin{corollary}
	\label{cor:leading-coeff}
	We have:
	\[
		\sum_{k=0}^n
			\card{
				\pi_0\CHur^*_{\PC}(G, \{c\},k)
			}
		\underset{n\to\infty}{=}
		\frac
			{\card{H_2(G, c)} n^s}
			{\card{G^{\ab}}s!}
		+
		O\!\left(
			n^{s-1}
		\right).
	\]
	In particular, an average order of $\card{\pi_0\CHur^*_{\PC}(G, \{c\}, n)}$ is given by:
	\[
		\frac{
			\card{H_2(G, c)} n^{s-1}
		}{
			\card{G^{\ab}}(s-1)!
		}
	\]
\end{corollary}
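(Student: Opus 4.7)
The plan is to combine the three main ingredients already established in this section: \Cref{prop:counting-really-likely-maps} (asymptotic count of really-likely maps of degree $\leq n$), \Cref{prop:counting-components-really-likely} (exactly $\card{H_2(G,c)}$ components per really-likely map with sufficiently large minimum), and \Cref{lem:few-small} (the components with at least one small multidiscriminant entry are asymptotically negligible).

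First I would rewrite the summatory function by stratifying components of $\CHur^*_{\PC}(G, \{c\}, k)$ according to whether their $c$-multidiscriminant $\psi$ satisfies $\min \psi \geq M$ (with $M$ as in \Cref{thm:conway-parker}) or not. The first part is exactly $\card{H_2(G, c)}$ times the number of really-likely maps of degree $k$ with $\min\psi \geq M$, by \Cref{prop:counting-components-really-likely}. The second part has size $O(k^{\Omega(D)-1}) = O(k^{s-2})$ by \Cref{lem:few-small}, and summing over $k \leq n$ contributes only $O(n^{s-1})$, which is absorbed into the error term.

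Next I would take care of the restriction $\min \psi \geq M$ on the count of really-likely maps. \Cref{prop:counting-really-likely-maps} gives the total count $\frac{n^s}{\card{G^{\ab}} s!} + O(n^{s-1})$ of really-likely maps of degree $\leq n$, while the number of really-likely maps $\psi$ of degree $\leq n$ with $\min \psi < M$ is $O(n^{s-1})$ (essentially the same argument as in the proof of \Cref{lem:few-small}, splitting the sum over which coordinate is small and counting solutions to the linear constraint $\tilde\pi(\psi)=1$ on the remaining $s-1$ coordinates, which live in an $(s-1)$-dimensional lattice intersected with a simplex). Putting these together yields
\[
\sum_{k=0}^n \card{\pi_0\CHur^*_{\PC}(G, \{c\}, k)} = \card{H_2(G,c)} \left( \frac{n^s}{\card{G^{\ab}} s!} + O(n^{s-1}) \right) + O(n^{s-1}),
\]
which is the desired asymptotic. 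The ``in particular'' statement then follows from the definition of average order in the footnote of \Cref{subsn:presentation}: any function $g$ with $\sum_{k=0}^n g(k) \sim \frac{\card{H_2(G,c)} n^s}{\card{G^{\ab}} s!}$ qualifies, and the monomial $\frac{\card{H_2(G,c)} n^{s-1}}{\card{G^{\ab}}(s-1)!}$ manifestly satisfies this by a standard Riemann-sum (or telescoping) comparison.

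The only nontrivial obstacle I anticipate is checking that really-likely maps with $\min \psi < M$ contribute only $O(n^{s-1})$, since \Cref{prop:counting-really-likely-maps} was stated for all really-likely maps at once, without the $\min \psi \geq M$ restriction. A clean way to handle this is to decompose by which of the $s$ coordinates fails to be $\geq M$, fix that coordinate to a value in $\{0, \ldots, M-1\}$, and then apply the same lattice-point counting argument as in \Cref{prop:counting-really-likely-maps} in dimension $s-1$ rather than $s$, yielding $O(n^{s-1})$ for each of the finitely many choices. Once this bookkeeping is in place the rest of the proof is purely formal.
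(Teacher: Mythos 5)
Your proposal is correct and follows essentially the same route as the paper: stratify by whether $\min\mu_c(x)\geq M$, apply \Cref{prop:counting-components-really-likely} and \Cref{lem:few-small}, then conclude with \Cref{prop:counting-really-likely-maps}. The only difference is that you explicitly verify that really-likely maps with $\min\psi<M$ contribute only $O(n^{s-1})$, a point the paper leaves implicit (it follows from the lattice-point count already appearing in the proof of \Cref{lem:few-small}), so your extra bookkeeping is sound but not a new idea.
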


\begin{proof}
	Start by separating components according to their multidiscriminants:
	\[
		\sum_{k=0}^n \card{\pi_0\CHur^*_{\PC}(G, \{c\},k)}
		=
		\sum_{
			\substack{
				\psi \text{ really-likely} \\
				\text{of degree } \leq n \\
				\min(\psi) \geq M
			}
		}
			\card{F_{\psi}}
		+
		\sum_{
			\substack{
				\psi \text{ really-likely} \\
				\text{of degree } \leq n \\
				\min(\psi) < M
			}
		}
		\card{F_{\psi}}.
	\]
	Now, apply \Cref{prop:counting-components-really-likely} and \Cref{lem:few-small} to obtain:
	\[
		\sum_{k=0}^n \card{\pi_0\CHur^*_{\PC}(G, \{c\},k)}
		=
		\sum_{
			\substack{
				\psi \text{ really-likely} \\
				\text{of degree } \leq n
			}
		}
		\card{H_2(G, c)} + \Oo{n^{s-1}}.
	\]
	Finally, using \Cref{prop:counting-really-likely-maps}:
	\begin{align*}
		\sum_{k=0}^n
			\card{
				\pi_0\CHur^*_{\PC}(G, \{c\},k)
			}
		& =
		\left (
			\frac
				{n^s}
				{\card{G^{\ab}}s!}
			+
			O\!\left(
				n^{s-1}
			\right)
		\right )
		\card{H_2(G, c)}
		+ \Oo{n^{s-1}} \\
		& =
		\frac{
			\card{H_2(G, c)}
			n^s
		}{
			\card{G^{\ab}}
			s!
		}
		+
		O\!\left(
			n^{s-1}
		\right).
		\qedhere
	\end{align*}
\end{proof}

\begin{remark}
	\label{rk:average-leadcoeff}
	Let $W \in \N$ and $Q_0, \ldots, Q_{W-1} \in \Q[X]$ be as in \Cref{prop:hilbfun-oscpol}, so that:
	\begin{equation}
		\label{eqn:periodic-polynomials}
		\card{\pi_0\CHur^*_{\PC}(G, \{c\}, n)} = Q_{n \bmod W}(n)
		\quad\quad
		\textnormal{for large enough } n.
	\end{equation}
	By \Cref{thmitem:comp-asymp-proj}, we have $\deg Q_i \leq s-1$ for all $i$, and there is an $i$ such that $\deg Q_i = s-1$.
	By \Cref{prop:hilbfun-oscpol}, we have $\deg Q_0 \geq \deg Q_i$ for all $i$, and thus $\deg Q_0 = s-1$.
	Let $q_i$ be the coefficient in front of $n^{s-1}$ in $Q_i$.
	This coefficient may vanish, but it is always nonnegative, and $q_0$ is nonzero.
	\Cref{eqn:periodic-polynomials} implies:
	\[
		\sum_{i=0}^n \card{\pi_0\CHur^*_{\PC}(G, \{c\},i)}
		\underset{n\to\infty}{\sim}
		\left (
			\frac{1}{W} \sum_{m=0}^{W-1} q_m
		\right )
		\frac{n^s}{s}.
	\]
	Together with \Cref{cor:leading-coeff}, this lets us compute the ``average leading coefficient'':
	\[
		\frac{1}{W} \sum_{m=0}^{W-1} q_m
		=
		\frac{
			\card{H_2(G, c)}
		}{
			\card{G^{\ab}}(s-1)!
		}.
	\]
	A noteworthy particular case is obtained when $\Comp_{\PC}(c)$ is generated by components with the same degree~$d$.
	We can then take $W=d$, and we have $q_0 > 0$, $q_1=q_2=\ldots=q_{d-1}=0$.
	Then:
	\[
		q_0
		=
		\frac{
			d\card{H_2(G, c)}
		}{
			\card{G^{\ab}}(s-1)!
		}.
	\]
	In this special case, $\card{\pi_0\CHur^*_{\PC}(G, \{c\}, n)}$ is accurately described as follows:
	\[
		\left \lbrace
			\begin{matrix}
				\card{\pi_0\CHur^*_{\PC}(G, \{c\}, n)} & = & 0 \quad\quad \text{if } n \not\in d\N \\ \\
				\card{\pi_0\CHur^*_{\PC}(G, \{c\},dn)} & \underset{n \to\infty}{\sim} & \displaystyle \frac{d^s\card{H_2(G, c)}}{\card{G^{\ab}}(s-1)!} n^{s-1}.
			\end{matrix}
		\right .
	\]
\end{remark}

\clearpage
\printbibliography
\end{document}